\documentclass[11pt,a4paper]{article}
\usepackage[dvips]{graphicx}
\usepackage{amssymb,latexsym,amsmath}
\usepackage{psfrag}
\usepackage[active]{srcltx} 
\input epsf

\usepackage[ansinew]{inputenc} 

\usepackage{color}  



\setlength{\textwidth}{154mm} \setlength{\textheight}{21.1cm}
\setlength{\oddsidemargin}{5mm}

\newtheorem{prop}{Proposition}[section]
\newtheorem{theo}[prop]{Theorem}

\newtheorem{defn}[prop]{Definition}
\newtheorem{rem}[prop]{Remark}
\newtheorem{rems}[prop]{Remarks}

\newenvironment{proof}
 {\begin{trivlist} \item[\hskip \labelsep {\bf Proof}\hspace*{3 mm}]}
 {\hfill$\Box$\end{trivlist}}

\newcommand{\R}{\mathbb{R}}


\newcommand{\auz}{a_{10}}
\newcommand{\auu}{a_{11}}
\newcommand{\adz}{a_{20}}
\newcommand{\adu}{a_{21}}
\newcommand{\add}{a_{22}}
\newcommand{\att}{a_{33}}
\newcommand{\atu}{a_{31}}
\newcommand{\atd}{a_{32}}
\newcommand{\atz}{a_{30}}
\newcommand{\q}[3][2]{#2_{#3}}
\newcommand{\der}[3][2]{\frac{\partial #2}{\partial #3}}

\begin{document}

\title{Geometric deformations of implicit curves}
\author{Marco Antônio do Couto Fernandes \footnote {Work partially supported by the FAPESP Thematic Project 2019/07316-0.} \, and Samuel P. dos Santos \footnote {Work supported by the postdoctoral grant from FAPESP, No. 2022/10370-9, under the direction of Farid Tari.}}

\maketitle
\begin{abstract}
Let \( f = 0 \) be an implicit singular plane curve. When deforming $f=0$, inflections and vertex emerge from the singularities. In this papper, we classify the deformations of $f=0$ with respect to the inflections and the vertices in the cases of codimension less than or equal to 2, that is, in the cases that occur generically in families of implicit curves with 2 parameters.

\end{abstract}

\renewcommand{\thefootnote}{\fnsymbol{footnote}}
\footnote[0]{ 2010 Mathematics Subject classification: Primary
58K05; Secondary 53A04, 58K60.} \footnote[0]{Keywords and Phrase:
Singular curve, inflexion, vertex, geometric deformation.}

\section{Introduction}

In this paper, we consider plane curves implicitly defined by $f=0$, where $f:(\mathbb{R}^2,0)\to (\mathbb{R},0)$ is a singular germ, and the geometry of a versal deformation $F:(\mathbb{R}^2\times\mathbb{R}^p,0)\to (\mathbb{R},0)$ with $p$ parameters. We concentrate the study on vertices and inflexions (for a study in the same sense, but for parametrized plane curves, see \cite{FRS}), that is, we will study how many of these points and in which positions they appear when we deform $f=0$ using $F$.

Singular plane curves can be related to regular curves. For instance, an orthogonal projection of a regular space curve to a plane is singular if the direction of projection is tangent to the space curve. Geometric information on the projected curve provides geometric information on the space
curve itself \cite{David,OT,DN,Wall}.

This papper will be a generalization of the work by Giblin and Diatta \cite{GD}, where they consider the deformations of $f=0$ obtained by the sections of the graph of $f$ by planes parallel to the tangent plane at the origin. Therefore, a directly application of this problem is plane sections of surfaces, which play an important role in, for example, Computer Vision, Shape Analysis, etc.

The paper is organized as follows. In Section 2, we introduce preliminary concepts such as the differential geometry of curves, the k-jets space and the Monge-Taylor map. In Section 3, we define the FRS-equivalence. The curves with singularities $A_1^-$, $A_1^+$, and $A_2$ will be discussed in Sections 4, 5, and 6, respectively.

\section{Preliminaries} \label{sec:prel}

Let $\gamma: I \to \R^2$ be an analytic and regular curve. If $\gamma(t)=(x(t),y(t))$, 
then the curvature of the curve is given by 
\begin{equation}\label{eq:k}
	\kappa(t) = \frac{x' y''-y' x''}{((x')^2+(y')^2)^{3/2}}(t),
\end{equation}
where $'$ means the derivative with respect to $t$.

A point $t_0$ is a \textit{inflection} of the curve $\gamma$ if $\kappa(t_0) = 0$ and it is a \textit{vertex} of $\gamma$ if $\kappa'(t_0) = 0$. 
Inflections and vertices can be captured by considering the contact of $\gamma$ with lines and circles, respectively (see \cite{BG}). 

We observe that a regular point $t_0 \in I$ is an inflection (resp. vertex) of $\gamma$ if and only if $i_\gamma (t_0) = 0$ (resp. $v_\gamma (t_0) = 0$ ), where
$$\begin{array}{ccl}
	i_\gamma & = & x' y''-y' x'',\vspace{0.2cm}\\
	v_\gamma & = & ((x')^2+(y')^2) (x' y'''-y' x''')+3(x' x''+y' y'') (x'' y'-y'' x').
\end{array}$$

We say that $t_0 \in I$ is an inflection (resp. vertex) of \textit{order} $n$ when $\kappa(t_0) = 0$, $\kappa^{(i)}(t_0) = 0$ for $1 \leq i \leq n-1$ and $\kappa^{(n)}(t_0) \neq 0$ (resp. $\kappa^{(i)}(t_0) = 0$ for $1 \leq i \leq n$ and $\kappa^{(n+1)}(t_0) \neq 0$), where $\kappa^{(i)}$ represents the $i$-th derivative of $\kappa$. An inflection or vertex of order 1 is called \textit{ordinary} or \textit{simple}.

A non-constant  germ of analytic function $f : (\R^2,0) \to (\R,0)$ defines a germ of analytic curve $C_f$ in $(\R^2,0)$ by
$$
C_f = \{(x,y) \in (\R^2,0) : f(x,y)=0 \}.
$$ 
Consider the decomposition of $f$ into irreducible factors in $\mathcal{E}_2$,
$f = f_1 \cdot f_2 \cdots f_n.$
Then $C_f = C_{f_1} \cup C_{f_2} \cup \cdots \cup C_{f_n}$. The curves $C_{f_i}$ are called \textit{branches} of $C_f$. Furthermore, if we write
$$f= F_m+F_{m+1}+\cdots,$$
where each $F_i$ is a homogeneous polynomial of degree $i$ in the variables $x$ and $y$ and $F_m \neq 0$, then $m$ is the \textit{multiplicity} of $f$ and $C_{ F_m}$ is the \textit{tangent cone} of $C_f$. We say that the origin of $\R^2$ is an \textit{umbilic point} of $f=0$ when the origin of $\R^3$ is an umbilic point on the surface $z = f(x,y)$.

Given a non-constant analytic function $f$,  we can calculate 
the curvature $\kappa$ and its derivatives at regular points of $C_f$ 
using the implicit function theorem.
Taking the numerators of these function, 
we say that
$p_0 = (x_0,y_0)$ is an inflection of $C_f$ if $p_0$ is a solution of the system
\begin{equation}\label{eq:sis_I}
	\left\{\begin{array}{l}
		f(x,y) = 0,\\
		i_f(x,y) = 0,
	\end{array}\right.
\end{equation}
with $i_f = f_{xx} f_y^2-2 f_{xy} f_x f_y+f_{yy} f_x^2$, and an inflection of order 2 if $p_0$ is a solutions of the system
\begin{equation}\label{eq:sis_I2}
	\left\{\begin{array}{l}
		f(x,y) = 0,\\
		i_{f}(x,y) = 0,\\
		i_{2,f}(x,y) = 0,
	\end{array}\right.
\end{equation}
with $$i_{2,f} = -f_x^3f_{yyy}+3f_x^2f_y f_{xyy}-3f_xf_y^2 f_{xxy}+f_y^3f_{xxx},$$
where subscripts denote partial derivatives.
We say that $p_0$ is a vertex of $C_f$ when $p_0$ is a solution of the system
\begin{equation}\label{eq:sis_V}
	\left\{\begin{array}{l}
		f(x,y) = 0,\\
		v_f(x,y) = 0,
	\end{array}\right.
\end{equation}
where
$$\begin{array}{ccl}
	v_f & = & (f_x^2+f_y^2) \left( -f_{x}^3 f_{yyy}+3 f_{x}^2 f_{y} f_{xyy} -3 f_{x} f_{y}^2 f_{xxy} +f_{y}^3 f_{xxx} \right)\vspace{0.2cm}\\
	& & +3 f_{x} f_{y} (f_{yy}-f_{xx}) \left(f_{x}^2 f_{yy}+f_{xx} f_{y}^2\right)\vspace{0.2cm}\\
	& & +6 f_{x} f_y f_{xy}^2 \left(f_{y}^2-f_{x}^2\right)\vspace{0.2cm}\\
	& & +3 f_{xy} \left(f_{x}^4 f_{yy}+3 f_{x}^2 f_{y}^2 (f_{xx}-f_{yy})-f_{xx} f_{y}^4\right),
\end{array}$$
and a vertex of order 2 if $p_0$ is a solutions of the system
\begin{equation}\label{eq:sis_I22}
	\left\{\begin{array}{l}
		f(x,y) = 0,\\
		v_{f}(x,y) = 0,\\
		v_{2,f}(x,y) = 0,
	\end{array}\right.
\end{equation}
with $$\begin{array}{ccl}
	v_{2,f} &= &2 (f_x^2 + f_y^2) (-6 f_x^4 f_{xy} f_{xyyy} f_y + 12 f_x^3 f_{xxy} f_{xy} f_y^2 - 
	3 f_x^2 f_{xx} f_{xxy} f_y^3 - 6 f_x^2 f_{xxxx} f_{xy} f_y^3 \\ & &+
	2 f_x f_{xx} f_{xxx} f_y^4 + 6 f_x f_{xxy} f_{xy} f_y^4 - 6 f_x f_{xx} f_{xyy} f_y^4 + 
	3 f_{xx} f_{xxy} f_y^5 - 4 f_{xxx} f_{xy} f_y^5\\& & + 3 f_x^5 f_{xyy} f_{yy} - 
	6 f_x^4 f_{xxy} f_y f_{yy} + 3 f_x^3 f_{xxx} f_y^2 f_{yy} + 
	3 f_x^3 f_{yyy} f_y^2 f_{yy} - 6 f_x^2 f_{xxy} f_y^3 f_{yy} \\& &+ 
	3 f_x f_{xxx} f_y^4 f_{yy} + f_x^4 f_{xx} f_y f_{yyy} - 2 f_x^3 f_{xy} f_y^2 f_{yyy}+ 
	3 f_x^2 f_{xx} f_y^3 f_{xyyy})\\& & - (f_x^2 + f_y^2)^2 (-4 f_x^3 f_{xyyy} f_y + 
	6 f_x^2 f_{xxyy} f_y^2 - 4 f_x f_{xxy} f_y^3 + f_{yyxx} f_y^4 + f_x^4 f_{yyyy})\\& & - 
	3 (-2 f_x f_{xy} f_y + f_{xx} f_y^2 + f_x^2 f_{yy}) (-f_x^2 f_{xx}^2 f_y^2 - 
	4 f_x f_{xx} f_{xy} f_y^3 - f_{xx}^2 f_y^4 - 8 f_{xy}^2 f_y^4 \\& &+ f_x^4 f_{xx} f_{yy} + 
	6 f_x^2 f_{xx} f_y^2 f_{yy} + 12 f_x f_{xy} f_y^3 f_{yy} + f_{xx} f_y^4 f_{yy} - 
	f_x^4 f_{yy}^2 - 5 f_x^2 f_y^2 f_{yy}^2).
\end{array}$$ 

\begin{rems}\label{rem:inf_vert}
	(1) Clearly, if $p_0$ is a singularity of $C_f$, then $f_x(p_0) = f_y(p_0) = 0$ and $p_0$ satisfies {\rm (\ref{eq:sis_I})} and {\rm (\ref{eq:sis_V})}, so it will be considered as an inflection and a vertex of $C_f$.
	
	(2) We can define inflections and vertices of order $n$ for $C_f$ as in the case of parameterized curves.
		
	(3) We will consider, without loss of generality, the point of interest $p_0$ to be the origin.
\end{rems}

\subsection{Monge-Taylor map}

Given a smooth function germ $f : (\R^2,0) \to \R$, the $k$\textit{-jet} of $f$ in $(x_0,y_0) \in (\R^2,0)$, denoted by $j^kf(x_0,y_0)$, is the Taylor polynomial of degree $k$ of $f$ in $(x_0,y_0)$, that is,
$$j^kf(x_0,y_0) = f(x_0,y_0)+\sum_{i=1}^{k} \sum_{j=0}^{i} \frac{\frac{\partial^i f}{\partial x^{i-j} \partial y^j}(x_0,y_0)}{(i-j)! j!}(x-x_0)^{i-j} (y-y_0)^j.$$
Note that we consider the constant term in the $k$-jet.

The set of all $k$-jets of germs $f:(\R^2,0) \to \R$ is denoted by $J^k(2,1)$, i.e., $J^k( 2,1)$ is the set of polynomials in two variables with degree less than or equal to $k$. We can identify $J^k(2,1)$ with $\R^{(k+2)(k+1)/2}$ so that
$$c_{00}+c_{10} x+c_{11} y+\cdots c_{kk} y^k \in J^k(2,1) \leftrightarrow (c_{00},c_{10},c_{11},\cdots,c_{kk}) \in \R^{(k+2)(k+1)/2}.$$ 

Given a smooth function germ $f: (\R^2,0)\to \R$, the \textit{Monge-Taylor map} is defined by
$$\begin{array}{ccccl}
	\Phi_f & : & (\R^2,0) & \to & J^k(2,1)\\
	&& (x,y) & \mapsto & j^kf(x,y).
\end{array}$$
Therefore, the image of $\Phi_f$ is a submanifold of $J^k(2,1)$ of dimension two.

\subsection{Stratification of k-jet space}

Let $f:(\mathbb{R}^2,0) \to (\mathbb{R},0)$ be a smooth function germ with
\begin{equation}\label{eq:fA1-naosimplif}
	j^k f(x,y)=\sum_{i=0}^{k} \sum_{j=0}^i \q{c}{ij}x^{i-j}y^j.
\end{equation} 
Throughout the text, we will be interested in curves implicitly defined by $f(x,y) = 0$. Such curves can be described by $\Phi_f(x,y) \in \mathcal{L}_0$, where
$$\mathcal{L}_0 = \{(c_{00},c_{10}, \cdots, c_{kk}) \in J^k(2,1) : c_{00} = 0\}.$$
Note that $\mathcal{L}_0$ is a regular submanifold of $J^k(2,1)$ with codimension 1.

The origin will be a singular point of $f$ when $c_{10} = c_{11} = 0$. Thus, the submanifold
$$\mathcal{S} = \{(c_{00},c_{10}, \cdots, c_{kk}) \in J^k(2,1) : c_{10} = c_{11} = 0\}$$
of $J^k(2,1)$ determines the singular points, that is, $f$ is singular in $(x_0,y_0)$ if and only if $\Phi_f (x_0,y_0) \in \mathcal{S}$. The singularity is of Morse type when $c_{21}^2-4 c_{20} c_{22} \neq 0$, and we obtain the open and dense set $\mathcal{S}_M$ of $\mathcal{S}$ which is given by
$$\mathcal{S}_M = \{(c_{00},c_{10}, \cdots, c_{kk}) \in \mathcal{S} : c_{21}^2-4 c_{20} c_{22} \neq 0\}.$$
It is easy to see that $\mathcal{S}$ and $\mathcal{S}_M$ are regular submanifolds of $J^k(2,1)$ with codimension 2.

It follows from the system (\ref{eq:sis_I}) that the origin will be an inflection of $C_f$ when
$$i_f(0,0) = 2(c_{20} c_{11}^2-c_{21} c_{10} c_{11}+c_{22} c_{10}^2) = 0.$$

This equality defines a submanifold $\mathcal{I}$ in $J^k(2,1)$, given by
$$\mathcal{I} = \{(c_{00},c_{10}, \cdots, c_{kk}) \in J^k(2,1) : i(c_{10},c_{11},\cdots,c_{22}) = 0\},$$
where
$$i(c_{10},c_{11},\cdots,c_{22}) = c_{20} c_{11}^2-c_{21} c_{10} c_{11}+c_{22} c_{10}^2,$$
called submanifold of inflections. Likewise, using (\ref{eq:sis_V}), we have the submanifold of vertices given by
$$\mathcal{V} = \{(c_{00},c_{10}, \cdots, c_{kk}) \in J^k(2,1) : v(c_{10},c_{11},\cdots,c_{33}) = 0\},$$
with
$$\begin{array}{cl}
	v(c_{10},c_{11},\cdots,c_{33}) = & -2 c_{10} c_{11}^3 c_{20}^2 + 3 c_{10}^2 c_{11}^2 c_{20} c_{21} - c_{11}^4 c_{20} c_{21} -  c_{10}^3 c_{11} c_{21}^2\\
	& + c_{10} c_{11}^3 c_{21}^2 - 2 c_{10}^3 c_{11} c_{20} c_{22} +  2 c_{10} c_{11}^3 c_{20} c_{22} + c_{10}^4 c_{21} c_{22} \\
	& - 3 c_{10}^2 c_{11}^2 c_{21} c_{22} +  2 c_{10}^3 c_{11} c_{22}^2 + c_{10}^2 c_{11}^3 c_{30} + c_{11}^5 c_{30} \\
	& - c_{10}^3 c_{11}^2 c_{31} -  c_{10} c_{11}^4 c_{31} + c_{10}^4 c_{11} c_{32} + c_{10}^2 c_{11}^3 c_{32} - c_{10}^5 c_{33} \\
	& -  c_{10}^3 c_{11}^2 c_{33}.
\end{array}$$
Note that $\mathcal{S} \subset \mathcal{I}$ and $\mathcal{S} \subset \mathcal{V}$, which is consistent with Remark \ref{rem:inf_vert} (1).

\begin{prop}\label{prop:IV}
	Let $f:(\mathbb{R}^2,0) \to (\mathbb{R},0)$ be a smooth function germ and  $P = \Phi_f(0,0)$. It follows that:
	\begin{enumerate}
		\item If $f$ has a $A_1^+$-singularity at the origin and the origin is not a umbilic point of $f$, then, locally at $P$, $\mathcal{I}$ (resp. $\mathcal{V}$) coincides with $\mathcal{S}$ (resp. is formed by two regular submanifolds of codimension one that are transversal in $\mathcal{S}$).
		\item If $f$ has a $A_1^-$-singularity at the origin, then, locally at $P$, $\mathcal{I}$ (resp. $\mathcal{V}$) is formed by two (resp. four) regular submanifolds of codimension 1 that are transversals in $\mathcal{S}$. Furthermore, two of the submanifolds of $\mathcal{V}$ are tangent to the submanifolds of $\mathcal{I}$ at $P$.
	\end{enumerate}
\end{prop}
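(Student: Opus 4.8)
The plan is to reduce the whole statement to the analysis of the tangent cones of $\mathcal I$ and $\mathcal V$ at $P$ in the transverse variables $(c_{10},c_{11})$. Viewing $v$ as a polynomial in $(c_{10},c_{11})$ with the other $c_{ij}$ as coefficients, I would write $v=v_4+v_5$ for its homogeneous pieces of degrees $4$ and $5$ in $(c_{10},c_{11})$ (these are the only degrees that occur). The key computation I would carry out first is the factorization
$$v_4=i\cdot Q,\qquad Q:=c_{21}c_{10}^2+2(c_{22}-c_{20})c_{10}c_{11}-c_{21}c_{11}^2,$$
where $i=c_{22}c_{10}^2-c_{21}c_{10}c_{11}+c_{20}c_{11}^2$ is the inflection form; this is verified by expanding $i\cdot Q$ and matching the five coefficients of $c_{10}^{4},\dots,c_{11}^{4}$ against $v_4$. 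Since $P\in\mathcal S$ has $c_{10}=c_{11}=0$, the tangent cone of $\mathcal I$ at $P$ is $\{i=0\}$ and that of $\mathcal V$ is $\{v_4=0\}=\{iQ=0\}$, the term $v_5$ being of higher order.

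Next I would read the geometry off the two discriminants, computed as quadratic forms in $(c_{10},c_{11})$,
$$\operatorname{disc}(i)=c_{21}^2-4c_{20}c_{22},\qquad \operatorname{disc}(Q)=4\big[(c_{22}-c_{20})^2+c_{21}^2\big].$$
The sign of $\operatorname{disc}(i)$ is exactly the $A_1^{\pm}$ type, and $\operatorname{disc}(Q)=0$ precisely at umbilic points ($c_{21}=0,\ c_{20}=c_{22}$). In case (1) ($A_1^+$, non-umbilic) $i$ is definite, so $\{i=0\}$ meets the $(c_{10},c_{11})$-plane only at the origin and $\mathcal I$ coincides locally with $\mathcal S$; meanwhile $Q$ is indefinite and factors into two distinct real linear forms, so $\mathcal V$ has a reduced tangent cone of two real lines. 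In case (2) ($A_1^-$, which forces non-umbilic) both forms are indefinite, $i=\ell_1\ell_2$ and $Q=\ell_3\ell_4$, so $\mathcal I$ has tangent cone $\ell_1\ell_2$ and $\mathcal V$ has tangent cone $\ell_1\ell_2\ell_3\ell_4$.

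To finish I must ensure these tangent cones are reduced. For $\mathcal V$ in case (2) the four lines are distinct iff $i$ and $Q$ have no common root, i.e. $\operatorname{Res}(i,Q)\neq 0$; rotating the plane so that $c_{21}=0$ (legitimate, as inflections and vertices are Euclidean notions and the factorization is rotation-covariant) I get $\operatorname{Res}(i,Q)=4c_{20}c_{22}(c_{20}-c_{22})^2$, which is nonzero because $A_1^-$ forces $c_{20}c_{22}<0$. With the tangent cone a union of distinct real lines (an ordinary multiple point), each simple real factor yields exactly one smooth real branch of the zero set tangent to that line, while the conjugate factors of a definite quadratic yield no real branch; letting the remaining $c_{ij}$ vary as parameters, each branch sweeps out a smooth codimension-one submanifold through $\mathcal S$, and distinct tangent lines make these sheets pairwise transversal, meeting exactly along $\mathcal S$. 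This gives $\mathcal I=\mathcal S$ locally and two sheets of $\mathcal V$ in case (1), and two sheets of $\mathcal I$ and four of $\mathcal V$ in case (2). Finally, since $i$ divides $v_4$, two of the four tangent lines of $\mathcal V$ are the two lines of $\mathcal I$, so two sheets of $\mathcal V$ are tangent to the sheets of $\mathcal I$ at $P$, which is the last assertion.

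The computational heart is the factorization $v_4=iQ$ together with the resultant, both lengthy but routine. The genuine subtlety, and the step I would be most careful about, is justifying that the higher-order term $v_5$ does not alter the branch structure: this is precisely the classical fact that a plane-curve germ with reduced tangent cone is an ordinary multiple point with one smooth branch per real tangent line, which I would invoke via the Newton polygon or iterated blow-up rather than reprove. Genericity of the hypotheses enters only to guarantee reducedness of the tangent cone — definiteness of $i$ in the $A_1^+$ case and $\operatorname{Res}(i,Q)\neq 0$ in the $A_1^-$ case.
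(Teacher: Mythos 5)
Your proposal is correct, and its computational core is the same as the paper's: the identical factorization of the degree-$4$ part, $j^4h=i\cdot Q$ with $Q=c_{21}c_{10}^2+2(c_{22}-c_{20})c_{10}c_{11}-c_{21}c_{11}^2$, the same discriminant dichotomy ($\operatorname{disc}(i)=c_{21}^2-4c_{20}c_{22}$ detecting the $A_1^{\pm}$ type, $\operatorname{disc}(Q)$ detecting umbilics), and the same concluding step of letting the frozen coefficients $c_{20},\dots,c_{33}$ vary. Where you genuinely diverge is the passage from the $4$-jet to the germ $h=v_4+v_5$: the paper invokes Arnold's classification, noting that $h$ has an $X_9$-singularity which is $4$-$\mathcal{R}$-determined when the discriminant of the binary quartic $iQ$, namely $4(c_{21}^2-4c_{20}c_{22})^3((c_{20}-c_{22})^2+c_{21}^2)^3$, is nonzero, so that $h\sim_{\mathcal R}j^4h$ and $h=0$ is diffeomorphic to the cone $iQ=0$ in one stroke; you instead use the elementary reduced-tangent-cone (ordinary multiple point) fact via blow-up or Newton polygon, certifying reducedness with $\operatorname{Res}(i,Q)$. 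Your route is more self-contained and localizes the nondegeneracy to exactly what is needed (in the $A_1^+$ case the real roots of $Q$ are automatically simple roots of $v_4$, since the roots of $i$ are non-real), and, unlike the paper's proof, it makes the final tangency assertion explicit by observing $i\mid v_4$; the paper's route buys a global diffeomorphism of the zero set at the cost of citing determinacy theory. Incidentally, your rotation trick is unnecessary: one checks the identity
$$\operatorname{Res}(i,Q)=-(c_{21}^2-4c_{20}c_{22})\left((c_{20}-c_{22})^2+c_{21}^2\right),$$
so the resultant is automatically nonzero under the Morse and non-umbilic hypotheses in both cases (this is also why the paper's quartic discriminant factors into the two cubes). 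The only residual imprecision in your argument --- smooth dependence of the branches on the frozen parameters when sweeping out the codimension-one sheets --- is shared verbatim by the paper's proof, and is repaired the same way in both (the blow-up/implicit-function construction works uniformly in nearby parameters because reducedness of the tangent cone is an open condition).
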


\begin{proof}
	Suppose that $f$ has a $A_1^+$-singularity at the origin. Fix $c_{20},c_{21},c_{22}$ in a neighborhood of $P$ and consider the function
	$$g(c_{10},c_{11}) = i(c_{10},c_{11},\cdots,c_{22}).$$ Thus $c_{21}^2-4 c_{20} c_{22} < 0$ and
	$$g(c_{10},c_{11}) = 0 \quad \Leftrightarrow \quad c_{10} = c_{11} = 0.$$
	Therefore, $\mathcal{I}$ coincides with $\mathcal{S}$.
	
	However, if $f$ has a $A_1^-$-singularity at the origin, then $c_{21}^2-4 c_{20} c_{22} > 0$. When $c_{22} \neq 0$ in a neighborhood of $P$, we have
	$$c_{20} c_{11}^2-c_{21} c_{10} c_{11}+c_{22} c_{10}^2 = 0 \quad \Leftrightarrow \quad c_{10} = \frac{c_{21} \pm \sqrt{c_{21}^2-4 c_{20} c_{22}}}{2 c_{22}} c_{11}.$$
	Therefore,
	$$\mathcal{I} = \left\{ c_{10} = \frac{c_{21} + \sqrt{c_{21}^2-4 c_{20} c_{22}}}{2 c_{22}} c_{11} \right\} \cup \left\{ c_{10} = \frac{c_{21} - \sqrt{c_{21}^2-4 c_{20} c_{22}}}{2 c_{22}} c_{11} \right\}.$$
	The cases $c_{20} \neq 0$ and $c_{20} = c_{22} = 0$ are analogous.
	
	For the vertices, fix $c_{20},c_{21},\cdots,c_{33}$ in a neighborhood of $P$ and consider the function
	$$h(c_{10},c_{11}) = v(c_{10},c_{11},\cdots,c_{33}).$$
	Note that
	\begin{equation}\label{eq:4-jet-g}
		j^4h = (c_{11}^2 c_{20} - c_{10} c_{11} c_{21} + c_{10}^2 c_{22}) (-2 c_{10} c_{11} c_{20} + c_{10}^2 c_{21} - c_{11}^2 c_{21} + 2 c_{10} c_{11} c_{22}).
	\end{equation}
	The function $h$ has a $X_9$-singularity, which is 4-$\mathcal{R}$-determined, when the discriminant of (\ref{eq:4-jet-g}) is non-zero \cite{ Arnold}, i.e.
	$$4 (c_{21}^2 - 4 c_{20} c_{22})^3 ((c_{20}-c_{22})^2 + c_{21}^2)^3 \neq 0.$$
	Therefore, if $f$ has a Morse singularity at the origin ($c_{21}^2 - 4 c_{20} c_{22} \neq 0$) and the origin is not an umbilic point of $C_f$ ($c_{20} \neq c_{22}$ or $c_{21} \neq 0$), then $h \sim_\mathcal{R} j^4h$ and $h=0$ is diffeomorphic to $j^4h=0$. 
	
	Note that
	$$c_{11}^2 c_{20} - c_{10} c_{11} c_{21} + c_{10}^2 c_{22} = 0$$
	it is composed of two transversal lines when $f$ has a $A_1^-$-singularity or just the origin ($c_{10} = c_{11} = 0$) when the singularity is $A_1^+$. On the other hand, the factor
	$$-2 c_{10} c_{11} c_{20} + c_{10}^2 c_{21} - c_{11}^2 c_{21} + 2 c_{10} c_{11} c_{22} = 0$$
	is always composed of two transversal lines at the origin when the origin is not a umbilic point. Therefore, when the origin is a singularity of $C_f$ and is not an umbilic point, $h = 0$ will have four (resp. two) branches when the singularity is of the type $A_1^-$ (resp. $A_1^+$) . The result follows by varying the parameters $c_{20},c_{21},\cdots,c_{33}$ that were fixed.
\end{proof}


Inflections of order 2 occur when $\kappa$ and $\kappa'$ vanishing simultaneously. Thus, the origin will be an inflection of order 2 of $C_f$ if and only if
\begin{equation}\label{eq:sis_i2_grande}
	\left\{ \begin{array}{l}
		c_{20} c_{11}^2-c_{21} c_{10} c_{11}+c_{22} c_{10}^2 = 0,\vspace{0.2cm}\\
		-2 c_{10} c_{11}^3 c_{20}^2 + 3 c_{10}^2 c_{11}^2 c_{20} c_{21} - c_{11}^4 c_{20} c_{21} -  c_{10}^3 c_{11} c_{21}^2+ c_{10} c_{11}^3 c_{21}^2\\
		- 2 c_{10}^3 c_{11} c_{20} c_{22} +  2 c_{10} c_{11}^3 c_{20} c_{22} + c_{10}^4 c_{21} c_{22} - 3 c_{10}^2 c_{11}^2 c_{21} c_{22} \\
		+  2 c_{10}^3 c_{11} c_{22}^2 + c_{10}^2 c_{11}^3 c_{30} + c_{11}^5 c_{30} - c_{10}^3 c_{11}^2 c_{31} -  c_{10} c_{11}^4 c_{31}\\
		+ c_{10}^4 c_{11} c_{32} + c_{10}^2 c_{11}^3 c_{32} - c_{10}^5 c_{33} -  c_{10}^3 c_{11}^2 c_{33} = 0.
	\end{array} \right.
\end{equation}
Using the first equation to simplify the second one, we see that the system (\ref{eq:sis_i2_grande}) is equivalent to
\begin{equation}\label{eq:sis_i2}
	\left\{ \begin{array}{l}
		c_{20} c_{11}^2-c_{21} c_{10} c_{11}+c_{22} c_{10}^2 = 0,\vspace{0.2cm}\\
		c_{11}^3 c_{30} - c_{10} c_{11}^2 c_{31} + c_{10}^2 c_{11} c_{32} -	c_{10}^3 c_{33} = 0.
	\end{array} \right.
\end{equation}
In this way, we define the submanifold of inflections of order 2 by
$$\begin{array}{ccl}
	\mathcal{I}_2 & = & \{(c_{00},c_{10}, \cdots, c_{kk}) \in J^k(2,1) : i(c_{10},c_{11},\cdots,c_{22}) =i_2(c_{10},c_{11},\cdots,c_{33}) = 0\}\vspace{0.2cm}\\
	& = & \{(c_{00},c_{10}, \cdots, c_{kk}) \in \mathcal{I} : i_2(c_{10},c_{11},\cdots,c_{33}) = 0\},
\end{array}$$
where
$$i_2(c_{10},c_{11},\cdots,c_{33}) = c_{11}^3 c_{30} - c_{10} c_{11}^2 c_{31} + c_{10}^2 c_{11} c_{32} -	c_{10}^3 c_{33}.$$

On the other hand, vertices of order 2 are points where $\kappa'$ and $\kappa''$ are zero. This will happen at the origin when
\begin{equation}\label{eq:sis_v2_grande}
	\left\{ \begin{array}{l}
		v(c_{10},c_{11},\cdots,c_{33}) = 0,\vspace{0.2cm}\\
		v_2(c_{10},c_{11},\cdots,c_{44}) = 0,		
	\end{array} \right.
\end{equation}
and the submanifold of vertices of order 2 is given by
$$\begin{array}{ccl}
	\mathcal{V}_2 & = & \{(c_{00},c_{10}, \cdots, c_{kk}) \in J^k(2,1) : v(c_{10},c_{11},\cdots,c_{22}) =v_2(c_{10},c_{11},\cdots,c_{33}) = 0\}\vspace{0.2cm}\\
	& = & \{(c_{00},c_{10}, \cdots, c_{kk}) \in \mathcal{V} : v_2(c_{10},c_{11},\cdots,c_{33}) = 0\},
\end{array}$$
with

$$\begin{array}{ccl}
	v_2(c_{10},c_{11},\cdots,c_{44}) & = & -4 c_{11}^6 c_{20}^3 - c_{10}^4 c_{11}^2 c_{20} c_{21}^2 + 4 c_{10}^2 c_{11}^4 c_{20} c_{21}^2 - 7 c_{11}^6 c_{20} c_{21}^2 + c_{10}^5 c_{11} c_{21}^3 \\ && + 7 c_{10} c_{11}^5 c_{21}^3 + 4 c_{10}^4 c_{11}^2 c_{20}^2 c_{22} + 4 c_{11}^6 c_{20}^2 c_{22} - 4 c_{10}^5 c_{11} c_{20} c_{21} c_{22} \\ && - 2 c_{10}^3 c_{11}^3 c_{20} c_{21} c_{22} + 10 c_{10} c_{11}^5 c_{20} c_{21} c_{22} - c_{10}^6 c_{21}^2 c_{22} - 6 c_{10}^4 c_{11}^2 c_{21}^2 c_{22} \\ && - 21 c_{10}^2 c_{11}^4 c_{21}^2 c_{22} + 4 c_{10}^6 c_{20} c_{22}^2 + 10 c_{10}^5 c_{11} c_{21} c_{22}^2 + 18 c_{10}^3 c_{11}^3 c_{21} c_{22}^2 \\ && - 4 c_{10}^6 c_{22}^3 - 4 c_{10}^4 c_{11}^2 c_{22}^3 - 10 c_{10}^3 c_{11}^4 c_{20} c_{30} - 10 c_{10} c_{11}^6 c_{20} c_{30} + 4 c_{10}^8 c_{44} \\ && + 11 c_{10}^4 c_{11}^3 c_{21} c_{30} + 18 c_{10}^2 c_{11}^5 c_{21} c_{30} + 7 c_{11}^7 c_{21} c_{30} - 12 c_{10}^5 c_{11}^2 c_{22} c_{30} \\ && - 16 c_{10}^3 c_{11}^4 c_{22} c_{30} - 4 c_{10} c_{11}^6 c_{22} c_{30} + 6 c_{10}^4 c_{11}^3 c_{20} c_{31} + 2 c_{10}^2 c_{11}^5 c_{20} c_{31} \\ && - 4 c_{11}^7 c_{20} c_{31} - 7 c_{10}^5 c_{11}^2 c_{21} c_{31} - 10 c_{10}^3 c_{11}^4 c_{21} c_{31} - 3 c_{10} c_{11}^6 c_{21} c_{31} \\ && + 8 c_{10}^6 c_{11} c_{22} c_{31} + 8 c_{10}^4 c_{11}^3 c_{22} c_{31} - 2 c_{10}^5 c_{11}^2 c_{20} c_{32} + 6 c_{10}^3 c_{11}^4 c_{20} c_{32} \\ && + 8 c_{10} c_{11}^6 c_{20} c_{32} + 3 c_{10}^6 c_{11} c_{21} c_{32} + 2 c_{10}^4 c_{11}^3 c_{21} c_{32} - c_{10}^2 c_{11}^5 c_{21} c_{32} \\ && - 4 c_{10}^7 c_{22} c_{32} + 4 c_{10}^3 c_{11}^4 c_{22} c_{32} - 2 c_{10}^6 c_{11} c_{20} c_{33} - 14 c_{10}^4 c_{11}^3 c_{20} c_{33} \\ && - 12 c_{10}^2 c_{11}^5 c_{20} c_{33} + c_{10}^7 c_{21} c_{33} + 6 c_{10}^5 c_{11}^2 c_{21} c_{33} + 5 c_{10}^3 c_{11}^4 c_{21} c_{33} \\ && - 8 c_{10}^6 c_{11} c_{22} c_{33} - 8 c_{10}^4 c_{11}^3 c_{22} c_{33} + 4 c_{10}^4 c_{11}^4 c_{40} + 8 c_{10}^2 c_{11}^6 c_{40} + 4 c_{11}^8 c_{40} \\ && - 4 c_{10}^5 c_{11}^3 c_{41} - 8 c_{10}^3 c_{11}^5 c_{41} - 4 c_{10} c_{11}^7 c_{41} + 4 c_{10}^6 c_{11}^2 c_{42} + 8 c_{10}^4 c_{11}^4 c_{42} \\ && + 4 c_{10}^2 c_{11}^6 c_{42} - 4 c_{10}^7 c_{11} c_{43} - 8 c_{10}^5 c_{11}^3 c_{43} - 4 c_{10}^3 c_{11}^5 c_{43} + 8 c_{10}^6 c_{11}^2 c_{44} \\ && + 4 c_{10}^4 c_{11}^4 c_{44}.\\
\end{array}$$
\begin{rem}
	In a similar way, we can define the submanifolds of inflections and vertices of order $n \geq 3$, denoted by $\mathcal{I}_n$ and $\mathcal{V}_n$, such that
	$$\mathcal{I} \supset \mathcal{I}_2 \supset \mathcal{I}_3 \supset \cdots \quad {\rm and} \quad \mathcal{V} \supset \mathcal{V}_2 \supset \mathcal{V}_3 \supset \cdots.$$
\end{rem}

\subsection{Codimension}

Let $f : (\R^2,0) \to \R$ be a smooth function germ. A \textit{deformation of $f$ with $n$ parameters}, or a \textit{$n$ parameters family of $f$}, is a smooth function germ $F:(\R^2 \times \R^n,0) \to \R$ with
$$F(x,y,0,\cdots,0)=f(x,y).$$
Consider
$$\mathcal{F} = \{F:(\R^2 \times \R^n,0) \to \R \, ; \, F(x,y,0,\cdots,0)=f(x,y)\}$$
with the Whitney topology. A deformation or family is said to be \textit{generic} when it belongs to an open and dense subset of $\mathcal{F}$.

A point at $C_f$ with a property ($P$) has \textit{codimension} $n$ when it occurs generically in $n$-parameters families of curves.

\begin{prop}
	A point at $C_f$ with a property ($P$) has \textit{codimension} $n$ if and only if the submanifold in $J^k(2, 1)$, for $k$ sufficiently large, obtained by the conditions imposed by ($P$) on the coefficients of the germ $f$ has codimension $n + 1$.
\end{prop}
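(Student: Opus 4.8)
The plan is to recast the occurrence of a point of $C_f$ with property $(P)$ as an incidence condition in the jet space, and then to extract its codimension by a transversality (dimension) count. Let $W\subset J^k(2,1)$ be the submanifold cut out by the conditions that $(P)$ imposes on the coefficients $c_{ij}$ (for instance $\mathcal{I}$, $\mathcal{V}$, $\mathcal{I}_2$, $\mathcal{V}_2$ in the cases already treated), and recall that $(x_0,y_0)$ lies on $C_f$ exactly when $\Phi_f(x_0,y_0)\in\mathcal{L}_0$. Hence $(x_0,y_0)$ is a point of $C_f$ with property $(P)$ if and only if $\Phi_f(x_0,y_0)\in V$, where $V:=\mathcal{L}_0\cap W$. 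For a deformation $F:(\R^2\times\R^n,0)\to\R$ I would introduce the extended Monge-Taylor map
$$\Phi_F:(\R^2\times\R^n,0)\to J^k(2,1),\qquad \Phi_F(x,y,u)=j^k_{(x,y)}F(\,\cdot\,,u),$$
whose source has dimension $n+2$; an occurrence of $(P)$ in the family then corresponds to a point of $\Phi_F^{-1}(V)$.

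First I would compute $\mathrm{codim}\,V$. Because the quantities $i,i_2,v,v_2$ and their higher-order analogues are built only from the derivatives of $f$ and never from the value $c_{00}=f(x_0,y_0)$, the submanifold $W$ is invariant under translation in the $c_{00}$-direction, i.e. it is a cylinder over $\mathcal{L}_0$. Consequently $W$ is transverse to $\mathcal{L}_0$, and
$$\mathrm{codim}\,V=\mathrm{codim}\,\mathcal{L}_0+\mathrm{codim}\,W=1+\mathrm{codim}\,W.$$

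Next I would invoke transversality. Since $\Phi_f=j^kf$ is the $k$-jet extension of $f$, Thom's jet transversality theorem, in its parametrised form, applies: for $F$ in an open and dense subset of $\mathcal{F}$ the map $\Phi_F$ is transverse to $V$. The point that must be checked is the required submersivity, namely that perturbing the family moves the jet freely; this holds because adding to $F$ an arbitrary polynomial of degree $\le k$ alters $j^k_{(x_0,y_0)}F(\,\cdot\,,u)$ by the corresponding jet, and $p\mapsto j^k_{(x_0,y_0)}p$ surjects onto $J^k(2,1)$. Thus the evaluation map is submersive and the hypotheses of the transversality theorem are met, openness following from the stability of transversality in the Whitney topology.

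Granting $\Phi_F\pitchfork V$, the preimage $\Phi_F^{-1}(V)$ has codimension $\mathrm{codim}\,V=1+\mathrm{codim}\,W$ in the $(n+2)$-dimensional source, hence dimension $n+1-\mathrm{codim}\,W$. If $n<\mathrm{codim}\,W-1$ this is negative, so transversality forces $\Phi_F^{-1}(V)=\emptyset$ and $(P)$ does not occur; thus $\mathrm{codim}(P)\ge\mathrm{codim}\,W-1$. If $n=\mathrm{codim}\,W-1$ the expected preimage is $0$-dimensional, and since the germ under study is centred at a point realising $(P)$ on $C_f$ (possibly as a degeneracy, so that $\Phi_f(0,0)\in V$), the set $\Phi_F^{-1}(V)$ is nonempty near $0$; being a transverse $0$-dimensional intersection it is stable, so $(P)$ occurs generically and persistently. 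The least number of parameters for the generic occurrence of $(P)$ is therefore $\mathrm{codim}\,W-1$, which by definition is its codimension; that is, $(P)$ has codimension $n$ if and only if $\mathrm{codim}\,W=n+1$. The main obstacle is the transversality step: one must guarantee that the constrained, holonomic maps $\Phi_F$ still attain transversality to $V$ inside the admissible class of deformations, which is exactly what the submersivity computation above secures.
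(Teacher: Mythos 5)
Your route is the paper's route: the paper's entire proof of this proposition is the single sentence that the claim ``follows from the transversality of the Monge-Taylor map,'' and your text supplies precisely the details that sentence leaves implicit --- the observation that the conditions defining $(P)$ involve only $c_{10},\dots,c_{kk}$ and never $c_{00}$, so that $W$ is a cylinder meeting $\mathcal{L}_0$ transversally and $\mathrm{codim}\,(\mathcal{L}_0\cap W)=\mathrm{codim}\,W+1$, followed by the count $\dim \Phi_F^{-1}(\mathcal{L}_0\cap W)=(n+2)-(\mathrm{codim}\,W+1)$ for an $n$-parameter family. That skeleton is correct and matches the paper's intent.

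There is, however, a genuine flaw in your transversality step. Genericity in this paper means openness and density in $\mathcal{F}=\{F : F(x,y,\vect{0})=f(x,y)\}$, so every admissible perturbation of $F$ must vanish identically at $\vect{u}=\vect{0}$; but your submersivity computation perturbs $F$ by an arbitrary polynomial $p(x,y)$, which changes the central fibre from $f$ to $f+p$ and exits $\mathcal{F}$. With admissible perturbations (say of the form $u_i\,p(x,y)$) the evaluation map is submersive only where $\vect{u}\neq\vect{0}$, and it fails to be submersive exactly on the slice $\{\vect{u}=\vect{0}\}$ --- which is where the point of interest lives, since the germ is centred at a point realising $(P)$, so $\Phi_F(0,0,\vect{0})=\Phi_f(0,0)\in\mathcal{L}_0\cap W$ for \emph{every} $F\in\mathcal{F}$. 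In particular, for $n<\mathrm{codim}\,W-1$ your assertion that transversality forces $\Phi_F^{-1}(V)=\emptyset$ cannot hold as stated, because the preimage always contains the origin; what genericity actually yields is the absence of solutions with $\vect{u}\neq\vect{0}$, which still gives the inequality you want. The repair is to obtain transversality at points over $\{\vect{u}=\vect{0}\}$ from the parameter directions $j^k\bigl(\partial F/\partial u_i\bigr)(x,y,\vect{0})$, which are freely prescribable jets at the origin --- this is exactly how versality is exploited later in Propositions \ref{prop:transvcod2ii} and \ref{prop:geomversal2} --- combined with the local fact, needed and true for the strata $\mathcal{S},\mathcal{I},\mathcal{V},\mathcal{I}_2,\mathcal{V}_2$ actually considered, that $\Phi_f^{-1}(\mathcal{L}_0\cap W)$ reduces to the origin, so that away from the slice your free-perturbation argument applies verbatim. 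With that adjustment your dimension count and the stated equivalence go through.
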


\begin{proof}
	The proof of the proposition follows from the transversality of the Monge-Taylor map.
\end{proof}

Note that singular points of $C_f$ have codimension greater than or equal to 1. In fact, $(x_0,y_0)$ is a singular point of $f$ if and only if $\Phi_f(x_0,y_0) \in \mathcal{S}$ and $\mathcal{S}$ has codimension 2. In this work, we will study singularity points with codimension less than or equal to 2. The points of $C_f$ with codimension 1 are defined by open submanifolds of $\mathcal{S}$, since these submanifolds also have codimension 2 in $J^k(2,1)$. The possible cases are:
\begin{itemize}
	\item $A_1^-$ singularities whose branches have no vertices and inflections at origin,
	\item $A_1^+$ singularities that are not umbilic at origin.
\end{itemize}
On the other hand, the cases of codimension 2 are obtained by imposing one, and only one, additional condition on the singularity in order to increase the codimension of the submanifold of $\mathcal{S}$. The points of $C_f$ with codimension 2 are:
\begin{itemize}
	\item $A_1^-$ singularities whose one of the branches has an ordinary inflection, has no vertice and the other branch has no vertice and inflextion at the origin, 
	\item $A_1^-$ singularities whose one of the branches has an ordinary vertex, has no inflextion and the other branch has no vertice and inflextion at the origin,
	\item $A_2$ singularities.
\end{itemize}

\section{FRS-Equivalence}
Diffeomorphisms do not preserve the geometry of curves such as inflections and vertices. Therefore, we cannot use the group $\mathcal{R}$ or $\mathcal{A}$ to study deformations when we are interested in these geometric properties of the curve $f=0$. Thus, we describe a method, similar to the one introduced in \cite{FRS} for parameterized curves, to study the geometry of deformations of singular plane curves $f=0$ with $f\in\mathcal{M}_2$, called \textit{FRS-deformations of implicit plane curves}. 

Consider two germs of $m$ parameter deformations $F_s$ and $\tilde{F}_u$ of the same plane curve $f=0$, whit $s, u\in (\mathbb{R}^m,0)$. Take a stratification germ $(S_1,0)$ of $\mathbb{R}^m$ such that if
$s_1$ and $s_2$ are in the same stratum, then the curves $F_{s_1}=0$ and $F_{s_2}=0$ satisfy the following properties:
\begin{itemize}
	\item[(i)] they are diffeomorphic;
	\item[(ii)] they have the same numbers of inflections and vertices;
	\item[(iii)] they have the same relative position of their singularities, points of self-intersection, inflections and vertices.
\end{itemize}

Let $(S_2,0)$ be another stratification of $\mathbb{R}^m$ satisfying (i), (ii) and (iii) for $\tilde{F}$. We say that these deformations are FRS-equivalent if there is a stratified homeomorphism $k:(\mathbb{R}^m,(S_1,0)) \to (\mathbb{R}^m,(S_2,0))$ such that all pairs of curves $F_s=0$ and $\tilde{F}_{k(s)}=0$, with $s \in (\mathbb{R}^m,0)$, satisfy properties (i), (ii) and (iii).

Let $F_s$ and $\tilde{F}_u$ be deformations of $f$ with $m$ and $n$ parameters ($m \leq n$). If $s \in \R^m$ and $t \in \R^{n-m}$, then $(s,t) \in \R^n$. Thus, $F_{(s,t)} = F_s$ is a deformation of $f$ with $n$ parameters. We say that $F_s$ and $\tilde{F}_u$ are FRS-equivalent when $F_{(s,t)}$ and $\tilde{F}_u$ are FRS-equivalent.
We aim to classify deformations of singularities up to FRS-equivalence, focusing on those of codimension 2 or less.

\section{$A_1^-$-singularity}\label{sec:A1-}

Let $f:(\R^2,0) \to \R$ be a smooth function germ. If the origin is a $A_1^-$-singularity of $f$ and belongs to the curve $C_f$, then
$$c_{00} = c_{10} = c_{11} = 0 \quad {\rm and} \quad c_{21}^2-4c_{20} c_{22} > 0.$$
Using isometries and homotheties, which preserve vertices and inflections, we can consider $c_{20} = 0$ and $c_{21} = 1$, that is
\begin{equation}\label{eq:fA1-simplif}
	j^kf=x y+a_{22} y^2+\sum_{i=3}^{k} \sum_{j=0}^i \q{a}{ij}x^{i-j}y^j.
\end{equation}

Assuming \eqref{eq:fA1-simplif}, a branche of $C_f$ is parallel to the $x$-axis and the other is transversal. Denote these branches by $\alpha_1$ and $\alpha_2$, respectively. Locally at the origin, we can parameterize them by $\alpha_1(s)=(s,y(s))$ and $\alpha_2(s)=(x(s),s)$. Since $f \circ \alpha_i  \equiv 0$, deriving it implicitly follows that
$$
j^3\alpha_1=(s, -\eta_1 s^2 - \eta_3 s^3 ) \ \ \text{and}\ \
j^3\alpha_2=(-\add s +\eta_{2} s^2-\bar{\eta}_{4} s^3,s),
$$
where 
	$$\begin{array}{ccl}
	\eta_1 & = & a_{30},\\
	\eta_2 & = & a_{22}^3 a_{30}-a_{22}^2 a_{31}+a_{22} a_{32}-a_{33}, \text{ and }\\
	\eta_3 & = & a_{22} a_{30}^2-a_{30} a_{31}+a_{40},\\
	\bar{\eta}_4 & = & 3 a_{22}^5 a_{30}^2 - 5 a_{22}^4 a_{30} a_{31} + 2 a_{22}^3 a_{31}^2 + 4 a_{22}^3 a_{30} a_{32} - 3 a_{22}^2 a_{31} a_{32} + a_{22} a_{32}^2 \\& &- 3 a_{22}^2 a_{30} a_{33} + 2 a_{22} a_{31} a_{33} - 
	a_{32} a_{33} + a_{22}^4 a_{40} - a_{22}^3 a_{41} + a_{22}^2 a_{42} - a_{22} a_{43} + a_{44}.	
\end{array}$$

\begin{prop}\label{prop:conditions}
	The following statements hold true.
	\begin{description}
		\item[a)] $\alpha_1$ has a first-order inflection at the origin if and only if $\eta_1=0$ and $a_{40}\neq0$.
		\item[b)] $\alpha_1$ has a first-order vertex at the origin if and only if $\eta_3=0$ and $\atz^3 - \atz^2 \atd + \atz a_{41} - a_{50}\neq0$.
		\item[c)] $\alpha_1$ has a second-order inflection at the origin if and only if $\eta_1=a_{40}=0$ and $\q{a}{50}\neq0$.
		\item[d)] $\alpha_1$ has a second-order vertex at the origin if and only if $\eta_3=\atz^3 - \atz^2 \atd + \atz a_{41} - a_{50}=0$ and $-a_{30} a_{31} a_{32}^2 + a_{30}^2 a_{32} a_{33} + a_{30} a_{32} a_{40} + a_{32}^2 a_{40} + 
		a_{30} a_{31} a_{41} + a_{22} a_{30} a_{32} a_{41} - a_{30} a_{33} a_{41} - 2 a_{40} a_{41} - 
		a_{30}^2 a_{42} - 2 a_{22} a_{30} a_{50} + a_{31} a_{50} - a_{22} a_{32} a_{50} + a_{33} a_{50} + a_{30} a_{51}-a_{60}\neq0$.
	\end{description}
\end{prop}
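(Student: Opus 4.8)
The plan is to exploit that the branch $\alpha_1(s)=(s,y(s))$ is a graph tangent to the $x$-axis, so that $x'\equiv 1$, $x''\equiv 0$ and, crucially, $y'(0)=0$. Writing $p=1+(y')^2$, the curvature of a graph is $\kappa=y''p^{-3/2}$; differentiating repeatedly and then setting $s=0$ (where $y'=0$ and $p=1$) collapses every term carrying a surviving factor of $y'$, leaving the clean evaluations
$$\kappa(0)=y''(0),\quad \kappa'(0)=y'''(0),\quad \kappa''(0)=y''''(0)-3\,y''(0)^3,\quad \kappa'''(0)=y^{(5)}(0)-18\,y''(0)^2\,y'''(0).$$
Because $\alpha_1$ is regular, the order of the first non-vanishing $s$-derivative of $\kappa$ at the origin is exactly the intrinsic order of the inflection ($\kappa$ vanishing) or vertex ($\kappa'$ vanishing), so these four identities are all I need to translate the geometric conditions into algebra.

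Next I would recover the Taylor coefficients of $y(s)=c_2s^2+c_3s^3+c_4s^4+c_5s^5+\cdots$ by substituting into the identity $f(s,y(s))\equiv 0$ and equating coefficients order by order. The orders $s^3$ and $s^4$ reproduce the data already recorded above, namely $c_2=-\eta_1=-a_{30}$ and $c_3=-\eta_3$, while the orders $s^5$ and $s^6$ yield explicit polynomial expressions for $c_4$ and $c_5$ in the $a_{ij}$. Since $y''(0)=2c_2$, $y'''(0)=6c_3$, $y''''(0)=24c_4$ and $y^{(5)}(0)=120c_5$, each geometric condition becomes an algebraic condition on the $c_i$, hence on the $a_{ij}$.

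With this dictionary the first three items follow by inspection. For \textbf{a)} an ordinary inflection means $\kappa(0)=0$ and $\kappa'(0)\neq 0$, i.e. $c_2=0$ and $c_3\neq 0$; since $c_2=-a_{30}=-\eta_1$ and, once $a_{30}=0$, $c_3=-\eta_3=-a_{40}$, this is precisely $\eta_1=0$ and $a_{40}\neq 0$. For \textbf{c)} a second-order inflection adds $\kappa'(0)=0$, and then $\kappa''(0)=24c_4$ because $c_2=0$; evaluating $c_4$ under $a_{30}=a_{40}=0$ gives $c_4=-a_{50}$, yielding $a_{50}\neq 0$. For \textbf{b)} an ordinary vertex means $\kappa'(0)=0$ and $\kappa''(0)\neq 0$, i.e. $c_3=0$ (equivalently $\eta_3=0$) together with $24c_4-3(2c_2)^3\neq 0$; substituting $c_2=-a_{30}$ and the value of $c_4$ valid when $\eta_3=0$ simplifies $\kappa''(0)$ to $24\bigl(a_{30}^3-a_{30}^2a_{32}+a_{30}a_{41}-a_{50}\bigr)$, which is exactly the stated non-vanishing condition.

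The only laborious item is \textbf{d)}. A second-order vertex requires $\kappa'(0)=\kappa''(0)=0$, which are precisely the two equalities in the statement (namely $\eta_3=0$ and $a_{30}^3-a_{30}^2a_{32}+a_{30}a_{41}-a_{50}=0$), together with $\kappa'''(0)\neq 0$. The decisive simplification is that $\eta_3=0$ forces $y'''(0)=6c_3=0$, so the mixed term in $\kappa'''(0)$ drops out and $\kappa'''(0)=y^{(5)}(0)=120\,c_5$. What remains is purely computational: one expands $c_5$ from the order-$s^6$ coefficient of $f(s,y(s))$ and reduces it modulo the ideal generated by $\eta_3$ and $a_{30}^3-a_{30}^2a_{32}+a_{30}a_{41}-a_{50}$ (using $\eta_3=0$ to eliminate $a_{40}$ and the second relation to eliminate $a_{50}$) until it agrees with the displayed polynomial up to a non-zero constant factor. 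I expect this reduction, rather than any conceptual step, to be the main obstacle, and it is best carried out with a computer algebra system.
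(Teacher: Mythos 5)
Your proposal is correct and follows essentially the same route as the paper, whose proof is just the remark that the result ``follows from the 3-jets given above for the curves $\alpha_1$ and $\alpha_2$ and direct calculations'': you make those direct calculations explicit by extracting the coefficients $c_2,\dots,c_5$ from $f(s,y(s))\equiv 0$ and evaluating $\kappa(0),\kappa'(0),\kappa''(0),\kappa'''(0)$ for the graph parametrization. Your evaluations check out (in particular $\kappa''(0)=24c_4-24c_2^3$ gives exactly the condition in b), and reducing the displayed polynomial in d) modulo $\eta_3$ and $a_{30}^3-a_{30}^2a_{32}+a_{30}a_{41}-a_{50}$ does yield $c_5$, so the final CAS step you defer indeed closes).
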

\begin{proof}
The result follows from the 3-jets given above for the curves $\alpha_1$ and $\alpha_2$ and direct calculations.
\end{proof}

\begin{rem}
	There are similar conditions for $\alpha_2$, it will be omitted because they are too lengthy.
\end{rem}
For the following, we state
\begin{equation}
	\begin{array}{ccl}
		\eta_4 & = & 3 a_{22}^5 a_{30}^2 + a_{22}^7 a_{30}^2 - 5 a_{22}^4 a_{30} a_{31} - a_{22}^6 a_{30} a_{31} + 
		2 a_{22}^3 a_{31}^2  + a_{22}^4 a_{40} + a_{22}^6 a_{40} \\
		& &  - 3 a_{22}^2 a_{31} a_{32} + a_{22}^4 a_{31} a_{32} + a_{22} a_{32}^2 - a_{22}^3 a_{32}^2 - 3 a_{22}^2 a_{30} a_{33} + a_{22}^4 a_{30} a_{33}\\
		& &  + 2 a_{22} a_{31} a_{33} - 2 a_{22}^3 a_{31} a_{33} - a_{32} a_{33} + 3 a_{22}^2 a_{32} a_{33} -2 a_{22} a_{33}^2  + 4 a_{22}^3 a_{30} a_{32} \\
		& &  - a_{22}^3 a_{41} - a_{22}^5 a_{41} + 
		a_{22}^2 a_{42} + a_{22}^4 a_{42} - a_{22} a_{43}- a_{22}^3 a_{43} + a_{44} + a_{22}^2 a_{44}.
	\end{array}
\end{equation}

\begin{prop}\label{prop:I2-V2}
	Let $f:(\mathbb{R}^2,0) \to (\mathbb{R},0)$ be a germ of smooth satisfying \eqref{eq:fA1-simplif}.
	In a neighborhood of $P = \Phi_f(0,0)$, the submanifold $\mathcal{I}_2$ (resp. $\mathcal{V}_2$) coincides with $\mathcal{S}$ when $\eta_1 \eta_2 \neq 0$ (resp. $\eta_3 \eta_4 \neq 0$).

	In the case where $\eta_1=0$ or $\eta_2=0$ (resp. $\eta_3=0$ or $\eta_4=0$), there is a regular connected component of $\mathcal{I}_2$ (resp . $\mathcal{V}_2$) with codimension two and passing at $P$ for each $\eta_i$ vanishing, besides $\mathcal{S}$.
\end{prop}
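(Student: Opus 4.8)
The plan is to fix the higher coefficients $\pi=(c_{20},c_{21},c_{22},c_{30},\dots)$ near their values at $P$, to treat $(c_{10},c_{11})$ as the variables, and to intersect the zero sets of $i_2$ and $v_2$ with the branches of $\mathcal{I}$ and $\mathcal{V}$ supplied by Proposition~\ref{prop:IV}. Since every monomial of $i_2$ (resp. $v_2$) has total degree $3$ (resp. at least $6$) in $(c_{10},c_{11})$, both functions vanish on $\mathcal{S}=\{c_{10}=c_{11}=0\}$; hence $\mathcal{S}\subset\mathcal{I}_2$ and $\mathcal{S}\subset\mathcal{V}_2$, and the whole problem is to locate the components of $\mathcal{I}_2,\mathcal{V}_2$ other than $\mathcal{S}$.

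For $\mathcal{I}_2$ this is immediate. As $i$ is homogeneous of degree $2$, the two branches of $\mathcal{I}$ are the honest lines $c_{10}=\beta_A\,c_{11}$ and $c_{11}=\gamma_B\,c_{10}$, where $\beta_A,\gamma_B$ depend only on $\pi$, with $\beta_A(P)=0$ and $\gamma_B(P)=\add$ (the inflectional directions of $\alpha_1$ and $\alpha_2$). Substituting into the homogeneous cubic $i_2$ gives $i_2|_A=c_{11}^3 g_A$ and $i_2|_B=c_{10}^3 g_B$ with $g_A=c_{30}-\beta_A c_{31}+\beta_A^2 c_{32}-\beta_A^3 c_{33}$ and $g_B=\gamma_B^3 c_{30}-\gamma_B^2 c_{31}+\gamma_B c_{32}-c_{33}$; evaluating at $P$ yields $g_A(P)=\eta_1$ and $g_B(P)=\eta_2$. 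Thus along each branch the zero locus of $i_2$ is $\mathcal{S}$ together with $\{g_A=0\}$ (resp. $\{g_B=0\}$), which is absent near $P$ when $\eta_1\neq0$ (resp. $\eta_2\neq0$) and is a regular codimension-two submanifold through $P$ when $\eta_1=0$ (resp. $\eta_2=0$); regularity is clear from $\partial g_A/\partial c_{30}=1$ and $\partial g_B/\partial c_{33}=-1$. This settles the statement for $\mathcal{I}_2$.

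For $\mathcal{V}_2$ the same strategy applies but the bookkeeping is much heavier, and here lies the main difficulty. By Proposition~\ref{prop:IV} and the factorisation $j^4h=i\cdot w$, the four branches of $\mathcal{V}$ near $P$ split into two, $A$ and $B$, tangent to the inflectional lines $\{i=0\}$ (these are the vertex loci of $\alpha_1,\alpha_2$) and two, $C$ and $D$, tangent to $\{w=0\}$. Because $v_2$ is no longer homogeneous, I would write each branch as a graph $c_{10}=\Phi(c_{11};\pi)$ (resp. $c_{11}=\Phi(c_{10};\pi)$) tangent to the appropriate line, substitute into $v_2$, and read off the lowest nonvanishing coefficient in the branch parameter. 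The structural key, verified from the explicit degree-$6$ part $P_6$ of $v_2$, is that $P_6$ is divisible by the inflectional form $i$. Consequently, on the branches $A,B$, which are tangent to $\{i=0\}$, the coefficient of the sixth power of the parameter vanishes identically, so the true leading coefficient emerges only after the branch expansion is pushed to higher order; a direct computation then identifies it, up to a nonvanishing factor, with $\eta_3$ on $A$ and $\eta_4$ on $B$. Hence $A,B$ produce an extra regular codimension-two component precisely when $\eta_3=0$, respectively $\eta_4=0$; regularity follows because $\eta_3$ contains $a_{40}$ and $\eta_4$ contains $a_{44}$ with nonzero coefficient (and $v$, hence the branches, is independent of $c_{40},c_{44}$), so the leading coefficient is a submersion at $P$.

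Finally, the branches $C,D$ must be shown to contribute nothing new. On these the order-six coefficient is $P_6$ evaluated at a slope $\beta$ with $w(\beta,1)=0$; writing $P_6=i\cdot Q$ this equals $i(\beta,1)\,Q(\beta,1)$, and since the origin is not umbilic the lines $\{w=0\}$ are transversal to $\{i=0\}$ at $P$, so $i(\beta,1)\neq0$ there. Checking that $Q(\beta,1)\neq0$ at $P$ as well, the leading coefficient is nonzero, so $\mathcal{V}_2$ meets $C,D$ only along $\mathcal{S}$. Assembling the four branches gives the asserted local description of $\mathcal{V}_2$. The principal obstacle throughout is the $\mathcal{V}_2$ computation: establishing $i\mid P_6$, carrying the branch expansions on $A,B$ far enough to expose $\eta_3,\eta_4$, and verifying the nonvanishing of $Q$ on the $w$-branches.
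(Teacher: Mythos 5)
Your proposal is correct and shares the paper's overall framework---freeze the higher coefficients, treat $i$, $i_2$, $v$, $v_2$ as plane curves in $(c_{10},c_{11})$, and identify $\eta_1,\eta_2$ (resp.\ $\eta_3,\eta_4$) as the first obstructions along the two distinguished branches---but your execution differs genuinely at both steps. For $\mathcal{I}_2$, the paper computes the resultant of $i$ and $i_2$ with respect to $c_{10}$ at $A_P$, obtaining $c_{11}^6\,\eta_1\eta_2$, and settles the degenerate cases with ``the result is analogous''; your factorization of $\{i=0\}$ into the lines $c_{10}=\beta_A c_{11}$ and $c_{11}=\gamma_B c_{10}$, with $i_2$ restricting to $c_{11}^3 g_A$ and $c_{10}^3 g_B$ where $g_A(P)=\eta_1$, $g_B(P)=\eta_2$, is sharper: it exhibits the extra component as $\{c_{10}=\beta_A c_{11},\, g_A=0\}$ and proves its regularity outright via $\partial g_A/\partial c_{30}=1$, which the paper never writes down. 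For $\mathcal{V}_2$, the paper parameterizes the tangent branches $\delta_i$ of $v=0$ and $\bar\delta_i$ of $v_2=0$ separately, verifies $j^3g-j^3\bar g=\eta_3 t^3$, and handles $\eta_3=0$ through the geometric ``transition from contact order $3$ to $4$'' picture with a limiting intersection point $Q\to P$; you instead restrict $v_2$ to the branches of $v=0$ and apply the implicit function theorem to the leading Taylor coefficient, which yields regularity more robustly (submersivity through the $c_{40}$, $c_{44}$ dependence, invisible to the branches of $v$ since $v$ involves nothing beyond $c_{33}$) at the price of establishing $i\mid P_6$; that divisibility does hold at $A_P$---for instance with $a_{22}=0$ one finds $P_6=-\,i\cdot(c_{10}^4+7c_{11}^4)$---and your reading of the lowest-degree part of $v_2$ as degree six is actually consistent with the printed formula for $v_2$, whereas the paper's claim that $j^7v_2=i\cdot h$ with $h$ homogeneous of degree five is not. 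Your explicit check that $Q$ does not vanish in the $w$-directions also makes precise a transversality between the remaining branches of $v=0$ and $v_2=0$ that the paper only implicitly assumes (it excludes common factors of its cofactor with $i$, never with $w$). Both arguments ultimately defer the same ``direct calculations'' identifying $\eta_3,\eta_4$ as the leading obstructions, so your sketch is at least as complete as the paper's proof, and tighter on the regularity claims.
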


\begin{proof}


	Consider $i$, $i_2$, $v$ and $v_2$ functions in the variables $c_{10}$ and $c_{11}$, with $c_{20},c_{21},\cdots,c_{44}$ fix. Let $A=(c_{20},c_{21},\cdots,c_{44})$ the point formed by the fixed coordinates. Define $A_P$ so that $P=(0,0,0,A_P)$, that is, $A_P=(0,1,a_{22},a_{30},\cdots, a_{44})$.
	
	When $A=A_P$, the resultant between the polynomials $i$ and $i_2$ with respect to $c_{10}$ is $c_{11}^6 \eta_1 \eta_2$. Thus, if $\eta_1 \eta_2 \neq 0$, then the solutions of (\ref{eq:sis_i2}) occur when $c_{11} = 0$ and we conclude that $c_{10} = 0$ and $\mathcal{I}_2$ coincides with $\mathcal{S}$ in a neighborhood of $P$.
	
	Regarding vertices, it follows from the proof of Proposition \ref{prop:IV} that $v = 0$ is formed by 4 branches that are two by two transversal at the origin. On the other hand, $v_2 = 0$ has between 3 and 7 branches that are also two by two transversal at the origin. In fact, we have that
	$$j^7v_2 = (c_{11}^2 c_{20}-c_{10} c_{11} c_{21}+c_{10}^2 c_{22}) h(c_{10},c_{11}),$$
	where $h$ is a homogeneous polynomial of degree 5 in $c_{10}$ and $c_{11}$ with no factors in common with $c_{11}^2 c_{20}-c_{10} c_{11} c_{21}+c_{10}^2 c_{22}$ when $A$ is close to $A_P$.
	
	Since $c_{11}^2 c_{20}-c_{10} c_{11} c_{21}+c_{10}^2 c_{22}$ is a common factor of the tangent cone of $v = 0$ and $v_2 = 0$, there are branches $\delta_i$ of $v=0$ and $\bar{\delta}_i$ of $v_2=0$ such that $\delta_i$ and $\bar{\delta}_i$ are tangent at the origin, for $i= 1, 2$. Note that the tangent cones of $\delta_i$ and $\bar{\delta}_i$, for $i=1, 2$, are the linear factors of $c_{11}^2 c_{20}-c_{10} c_{11} c_{21}+c_{10}^2 c_{22}$. Thus, suppose that the tangent cone of $\delta_1$ and $\bar{\delta}_1$ (resp. $\delta_2$ and $\bar{\delta}_2$) is
	$$c_{10} = \frac{c_{21} - \sqrt{c_{21}^2-4c_{20}c_{22}}}{2 c_{22}} c_{11} \quad \left({\rm resp. } \,\,\, c_{10} = \frac{c_{21} + \sqrt{c_{21}^2-4c_{20}c_{22}}}{2 c_{22}} c_{11}\right).$$
	
	When $A=A_P$, using the Implicit Function Theorem we can parameterize $\delta_1$ by $(g(t),t)$ and $\bar{\delta}_1$ by $(\bar g(t),t)$. By implicitly deriving it, it is possible to calculate $j^3 g$ and $j^3 \bar g$ and verify that
	$$j^3 g-j^3 \bar g = \eta_3 t^3.$$
	Therefore, if $\eta_3 \neq 0$, then the contact order between $\delta_1$ and $\bar{\delta}_1$ is three for values of $A$ in a neighborhood of $A_P$. Likewise, we see that the contact order between $\delta_2$ and $\bar{\delta}_2$ is also three for values of $A$ in a neighborhood of $A_P$ when $\eta_4 \neq 0$.
	
	Therefore, when $\eta_3 \eta_4 \neq 0$, the contact order between $\delta_i$ and $\bar\delta_i$ remains unchanged in a neighborhood of $A_P$. Thus, for any parameter values $c_{20},c_{21},\cdots,c_{44}$ close to $A_P$, the curves $v=0$ and $v_2=0$ intersect only at the origin. In this way, $\mathcal{V}_2$ coincides with $\mathcal{S}$.
	
	Suppose that $\eta_3 = 0$. The contact order between $\gamma_1$ and $\bar \gamma_1$ at the origin is, generically, 4 at $A_P$ and 3 for values of $A$ close to $A_P$. It follows from the transition between contact order 3 and 4 (see Figure \ref{fig:transition_3_for_4}), that for each $c_{20},c_{21},\cdots,c_{44}$ in a neighborhood of $A_P$ exists a point $Q(c_{20},\cdots,c_{44})$ at the intersection of $\delta_i$ and $\bar\delta_i$ such that $\delta_i$ and $\bar\delta_i$ are transversals in $Q(c_{20},\cdots,c_{44})$ and the limit of $Q(c_{20},\cdots,c_{44})$ as $(c_{20},\cdots,c_{44})$ tends to $A_P$ is $P$. By varying the parameters $c_{20},c_{21},\cdots,c_{44}$ we obtain the desired submanifold. The result is analogous when $\eta_1$, $\eta_2$ and $\eta_4$ are different to 0.
	\begin{figure}[h!]
		\centering
		\includegraphics[width=\textwidth]{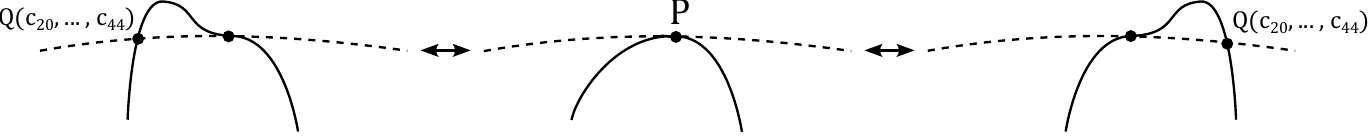}
		\caption{Transition between contact order 3 and 4. The solid curve is $\delta_1$ and the dashed one is $\bar\delta_1$.}
		\label{fig:transition_3_for_4}
	\end{figure}
\end{proof}

A deformation of $f$ with $p$ parameters is given by
\begin{eqnarray}\label{eq:Fcod2i}
	F(x,y,\textbf{u})=\sum_{i+j=0}^{k}\q{A}{ij}(\textbf{u})x^{i-j}y^j +O(k+1),
\end{eqnarray}
with $\textbf{u}=(u_1,\cdots,u_p) \in \mathbb{R}^p$, $\q{A}{ij}(0)=\q{a}{ij}$, and $O(k+1)$ is the Lagrange remainder of $F$.
For $F$ to be {$\mathcal{R}_e$-versal}, it is necessary that $\dfrac{\partial A_{00}}{\partial u_i}(u_i)\neq0$ for some $i=1,...,n$.
Suppose, without loss of generality, that $A_{00}(\textbf{u})=u_1$. Recall that a versal deformation of a $A_1^-$-singularity is as in Figure \ref{fig:deformacaogenerica}.
\begin{figure}[h!]
	\centering
	\includegraphics[width=7cm]{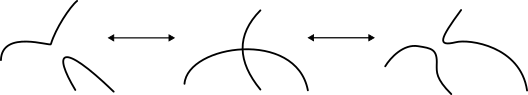}
	\caption{Versal deformation of a $A_1^-$ singularity.}
\label{fig:deformacaogenerica}
\end{figure}

It follows from Proposition \ref{prop:IV} that the $\mathcal{I}$ (resp. $\mathcal{V}$) is formed by two (resp. four) submanifolds with codimension 1 whose intersection is transversal and coincides with the singular stratum $\mathcal{S}$. Since $F$ is versal, the Monge Taylor map $\Phi_F$ is transversal to $\mathcal{S}$ and, consequently, transversal to $\mathcal{I}$ and $\mathcal{V}$. Therefore, $\Phi_F^{-1}(\mathcal{I})$ (resp. $\Phi_F^{-1}(\mathcal{V})$) are two (resp. four) regular manifolds of codimension 
1 in $\R^{2+p}$. Clearly, their intersections with $F^{-1}(0)$ near the origin result in manifolds of codimension $2$.
These manifolds are solutions of $F=I=0$ for the inflections and $F=V=0$ for the vertices, where $I(x,y,\textbf{u})=i_F(x,y)$ and $V(x,y,\textbf{u})=v_F(x,y)$. 

\subsection{Case of codimension 1}

Let $F$ be a versal deformation of $f$ with one parameter $t$ as (\ref{eq:Fcod2i}). Since $F_t(0,0,0)=1$, by Implicit Function Theorem, there exists a smooth function $T(x,y)$ such that $$F(x,y,T(x,y))=0,$$ for any $(x,y)$ near the arigin. Thus, $(x,y,t)$ is a solution of $F = I = 0$ if and only if $t = T(x,y)$ and $\bar{I}(x,y)=I(x,y,T(x,y)) = 0$. Direct calculations reveal that
$$
j^2 \bar{I}=2 j^2 f= 2y(\add y + x).
$$
Since $\bar I = 0$ has two branches, these branches are tangent to the branches of $C_f$ and can be parameterized by $\beta_1(s)=(s,b_1(s))$ and $\beta_2(s)=(b_2(s),s)$ with

$$
\begin{array}{ccl}
	b_1(s) & = & (9 \add \atz^2 - 3 \atz \atu + 2 a_{40} + \atz A_{10}'(0))s^3+O(4),\vspace{0.2cm}\\
	
	b_2(s)& = & -\add s
	-(-9 \add^5 \atz^2 + 15 \add^4 \atz \atu - 6 \add^3 \atu^2 - 
	12 \add^3 \atz \atd + 9 \add^2 \atu \atd \\ &&- 3 \add \atd^2 + 9 \add^2 \atz \att- 
	6 \add \atu \att + 3 \atd \att - 2 \add^4 a_{40} + 2 \add^3 a_{41}- 
	2 \add^2 a_{42}  \\ && + 2 \add a_{43}-2a_{44}-\add^4 \atz A_{10}'(0) + 
	\add^3 \atu A_{10}'(0) - \add^2 \atd A_{10}'(0) + 
	\add \att  A_{10}'(0)  \\ && + \add^3 \atz  A_{11}'(0) - 
	\add^2 \atu  A_{11}'(0) + \add\atd  A_{11}'(0) - \att  A_{11}'(0)) s^3+O(4),
\end{array}
$$
where $O(4)$ are terms with order greater than or equal to 4 at $s$. The curves $\beta_i$ are called \textit{inflection curves}.

Similarly for the vertices, we define $\bar{V}(x,y)=V(x,y,T(x,y))$. Since $\bar V(x,y)=0$ has 4 branches and
$$
j^4 \bar{V}=-6 y (x + \add y) (x^2 + 2 \add x y - y^2),
$$
it follows that two branches of $\bar{V}=0$ are tangent to the branches of $C_f$, and the other two are always transverse to them. The branches of $\bar V = 0$ tangent to $C_f$ can be parameterized by
$$
\begin{array}{ccl}
	\gamma_1(s) & = & (s,-\eta_1 s^2-2\eta_3s^3+O(3)),\\
	\gamma_2(s) & = & (-\add s+\eta_2s^2+O(3),s),
\end{array}
$$
and the other branches are given by
$$
\begin{array}{ccl}
	\gamma_3(s) & = & (s,\add+\sqrt{1+\add^2} \, s+O(2))\vspace{0.2cm}\\
	\gamma_4(s) & = & (s,\add-\sqrt{1+\add^2} \, s+O(2)),
\end{array}
$$
where $O(n)$ are terms with order greater than or equal to $n$ at $s$. The curves $\gamma_i$ are called \textit{vertex curves}.

In this section, take $\eta_i\neq0$ for $i=1,2,3$ and $4$. Thus, the branches of $C_f$ has no vertices or inflections at the origin (by Proposition \ref{prop:conditions}). In this case, we say that the origin is a \textit{generic} $A_1^-$ singularity.

Considering the curves inflections and vertex curves, we note that
$$
\begin{array}{ccl}
	\alpha_1(s)-\beta_1(s) & = & (0,-\eta_1 s^2+O(3)),\\
	\alpha_1(s)-\gamma_1(s) & = & (0,\eta_3s^3+O(4)),\\
	\alpha_2(s)-\beta_2(s) & = & (\eta_2s^2+O(3),0),\\
	\alpha_2(s)-\gamma_2(s) & = & (\eta_4s^3+O(4),0)
\end{array}
$$
and, since $\eta_1, \eta_3, \eta_2$, and $\eta_4$ are non-zero, the tangency between the branches of $C_f$ and the inflection curves will be ordinary. On the other hand, with the vertex curves, the tangencies are of order two.




\begin{defn}
	To describe the geometric configuration of a one-parameter versal deformation $F$ of $f$, we will write, for example, $vivi-vv \leftrightarrow viv-vvi$, where each side of the arrow represents a sign of the parameter, $i$ and $v$ represent inflections and vertices, and each subpart separated by the hyphen represents one of the connected parts of the deformation curve. Furthermore, the order of the letters indicates the sequence in which the inflections and vertices appear.
\end{defn}

The inflection branches are tangent to the branches of \( C_f \) with second-order contact. This implies that the transition of inflections will always occur as \( ii-\varnothing \leftrightarrow i-i \). The sign of the parameter on each side of the notation \( ii-\varnothing \leftrightarrow i-i \) will be significant; to determine this, we calculate \( T(\beta_i(s)), i=1,2 \), given by \( T(\beta_1(s)) = -\eta_1 s^3 + \cdots \) and \( T(\beta_2(s)) = 24 \eta_2 s^3 + \cdots \). Thus, if \( \eta_1 > 0 \), for example, the region bounded by \( C_f \) containing the part of the curve \( \beta_1 \) with \( s > 0 \) corresponds to the parameter \( t < 0 \). By applying the same analysis to all combinations of signs between \( \eta_1 \) and \( \eta_2 \), we conclude that \( \eta_1 \eta_2 > 0 \) if and only if the regions with two inflections are represented by deformations with a positive parameter. Figure \ref{fig:exemplocurvasinflexoes} illustrates the example where \( \eta_1 \eta_2 > 0 \).

\begin{figure}[h!]
	\centering
	\includegraphics[width=5cm]{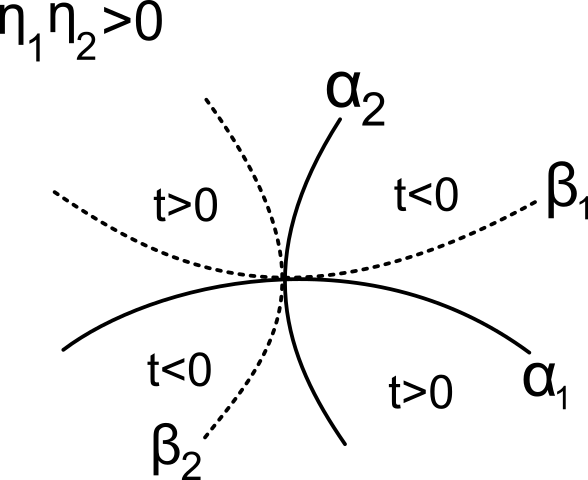}
	\caption{Exemple of the configuration of $C_f$ and the inflexion curves, when $\eta_1\eta_2>0$.}
\label{fig:exemplocurvasinflexoes}
\end{figure}

For vertex curves, we note that, since we always have two branches of vertices transverse to the branches of $C_f$ and their respective tangent lines have opposite angular coefficients, there will always be a vertex branch in each region and this will be at the middle of the other branches, if they appear.

Now we consider the other vertex curves. Suppose $\eta_3 \eta_4<0$. Note that, in this case, the coefficient of third degree (the first non-zero one) of $\alpha_i(s)-\gamma_i(s)$ has distinct signs for different $i$. This implies that there is one more vertex in each region of $f=0$. As the contact order of the branches of $C_f$ with the branches of vertices is always greater then with the inflections, we have that the only possible configuration in this case is given in Figure \ref{fig:exemplo1}, for the case $\eta_1\eta_2>0$ and the geometry of the deformation is showed in \ref{fig:casoscod=1}, third subfigure.
\begin{figure}[h!]
	\centering
	\includegraphics[width=6cm]{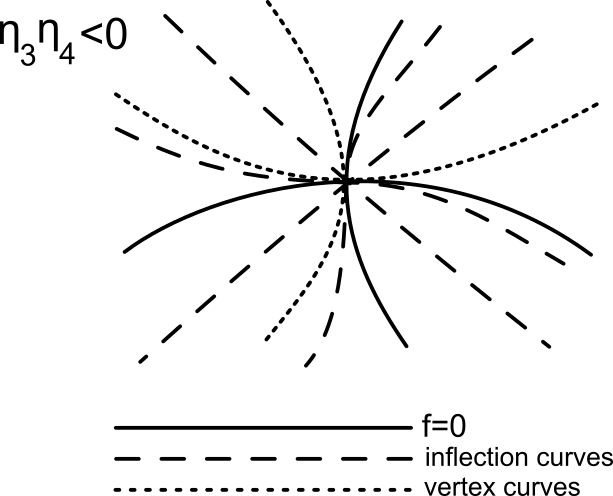}
	\caption{Configuration of the vertex and inflexional curves for $\eta_1\eta_2>0$ and $\eta_3\eta_4<0$.}
	\label{fig:exemplo1}
\end{figure}

\begin{figure}[h!]
	\centering
	\includegraphics[width=8cm]{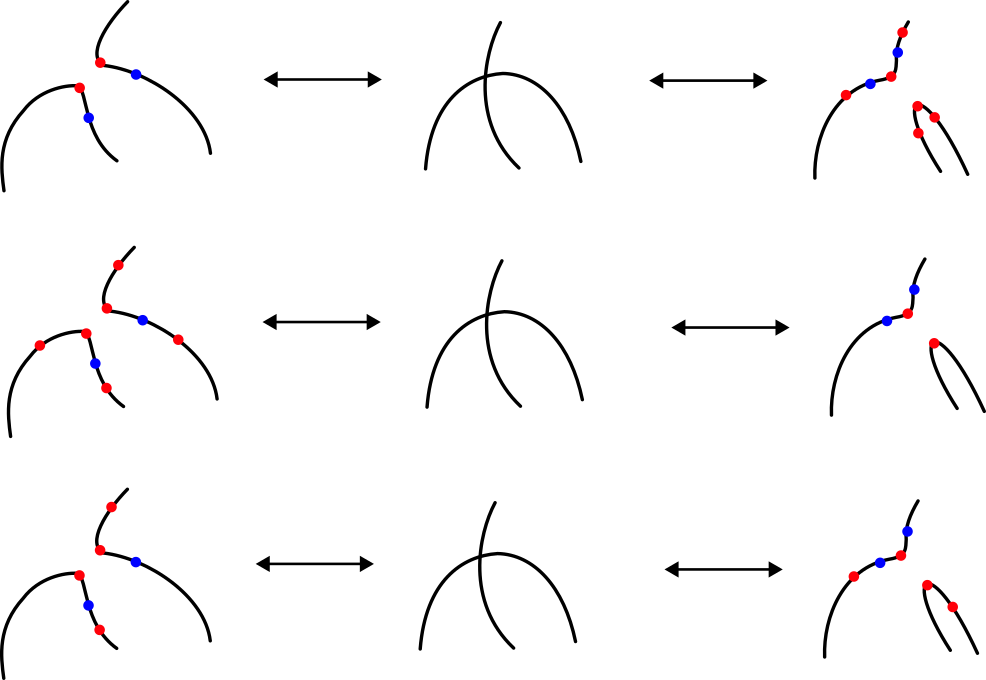}
	\caption{Possible geometric configuration for the general case (codimension 1). Red cdots are vertices and blue cdots are inflections.}
	\label{fig:casoscod=1}
\end{figure}

Now suppose $\eta_3 \eta_4>0$. In this case, with the same analysis as in the previous case, we conclude that there are two vertices in two opposite regions of $C_f$ and none in the other regions. Direct calculations show that $T(\gamma_1)=\eta_3 s^4 +\cdots$. Hence, it holds that $\eta_3>0$ if and only if the regions with two vertices are those represented by positive parameter.

Thus, we conclude that if $\eta_1\eta_2\eta_3>0$, then the regions with two additional vertices coincide with those where the inflections are of the $ii-\varnothing$ form. Again, considering that the tangencies of $C_f$ with the branches of vertices have of higher order than the tangencies with the branches of inflections, we have that the only possible behavior in this case is given in Figure \ref{fig:casoscod=1}, first subfigure. Now, for the case $\eta_1\eta_2\eta_3<0$, the reasoning follows analogously.

Since we only used the hypothesis of $F$ being \(\mathcal{A}\)-versal, we have just proved the following result.


\begin{theo}\label{teo:geocod2casei}
	Let $f:(\mathbb{R}^2,(0,0))\to(\mathbb{R},0)$ be a smooth function germ with a generic $A_1^-$ singularity. Then, any 1-parameter $\mathcal{A}$-versal deformation of $f$ is $FRS$-equivalent to $F:(\mathbb{R}^3,0)\to(\mathbb{R},0)$ with $F(x,y,t)=f(x,y)+t$. We have the following possible geometric configurations for $F$:
	\begin{itemize}
		\item $vivi - vv \leftrightarrow viv - vvi$, if $\eta_3\eta_4<0$.
		\item $vi - vi \leftrightarrow viviv - vvv$, if $\eta_3\eta_4>0$ and $\eta_1 \eta_2\eta_3>0$.
		\item $ivi - v \leftrightarrow vviv - vviv$, if $\eta_3\eta_4>0$ and $\eta_1 \eta_2\eta_3<0$.		
	\end{itemize}
\end{theo}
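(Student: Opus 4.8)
The plan is to separate the statement into a \emph{reduction step} and a \emph{configuration step}. The reduction step shows that every $1$-parameter $\mathcal{A}$-versal deformation of $f$ is FRS-equivalent to the model $F(x,y,t)=f(x,y)+t$; the configuration step reads off the three possible pictures from the signs of $\eta_3\eta_4$ and $\eta_1\eta_2\eta_3$.

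For the reduction, first I would observe that $F(x,y,t)=f(x,y)+t$ is $\mathcal{A}$-versal, since $\partial A_{00}/\partial t\equiv 1\neq 0$, which is exactly the versality condition recorded above in the one-parameter case. For an arbitrary $1$-parameter $\mathcal{A}$-versal deformation $\tilde F$, versality forces its Monge-Taylor map $\Phi_{\tilde F}$ to be transversal to $\mathcal{S}$, hence (by Proposition \ref{prop:IV}) to $\mathcal{I}$ and $\mathcal{V}$ as well. The stratification of the parameter line induced by pulling back $\mathcal{S}\subset\mathcal{I},\mathcal{V}$ through $\Phi_{\tilde F}$ therefore carries the same incidence data as the one induced by $\Phi_F$, and a stratified homeomorphism matching these strata preserves the number of vertices and inflections and their relative positions. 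This is precisely an FRS-equivalence, so the problem reduces to analysing $F(x,y,t)=f(x,y)+t$.

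For the configuration step I would use the inflection curves $\beta_1,\beta_2$ and the vertex curves $\gamma_1,\gamma_2,\gamma_3,\gamma_4$ computed above, together with the genericity hypothesis $\eta_i\neq 0$, which by Proposition \ref{prop:conditions} guarantees that no vertex or inflection sits at the origin. The two transverse branches $\gamma_3,\gamma_4$ deposit exactly one vertex in each of the four local regions determined by $C_f$, always lying between the remaining points. The inflection curves are tangent to $C_f$ with deviation of order $s^2$ (since $\alpha_1-\beta_1=(0,-\eta_1 s^2+O(3))$ and $\alpha_2-\beta_2=(\eta_2 s^2+O(3),0)$), forcing the inflection transition $ii-\varnothing\leftrightarrow i-i$; comparing $T(\beta_1(s))=-\eta_1 s^3+\cdots$ with $T(\beta_2(s))=24\eta_2 s^3+\cdots$ shows that the two-inflection side is the positive-parameter side precisely when $\eta_1\eta_2>0$. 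The tangent vertex branches $\gamma_1,\gamma_2$ deviate from $C_f$ only at order $s^3$ (from $\alpha_1-\gamma_1=(0,\eta_3 s^3+O(4))$ and $\alpha_2-\gamma_2=(\eta_4 s^3+O(4),0)$), so the sign of $\eta_3\eta_4$ decides whether these extra vertices appear one per region ($\eta_3\eta_4<0$) or doubled up in two opposite regions ($\eta_3\eta_4>0$), while $T(\gamma_1(s))=\eta_3 s^4+\cdots$ fixes which sign of the parameter carries the doubled regions.

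Finally I would assemble each symbol string by traversing every connected component of the deformed curve and listing its inflections and vertices in the order in which they occur, using repeatedly that the vertex branches hug $C_f$ with strictly higher order ($s^3$ versus $s^2$) than the inflection branches, which constrains how the additional tangent vertices interleave with the inflections. Collating the sign data produces $vivi-vv\leftrightarrow viv-vvi$ when $\eta_3\eta_4<0$; and when $\eta_3\eta_4>0$, the subcase $\eta_1\eta_2\eta_3>0$, in which the doubled-vertex regions coincide with the two-inflection regions, produces $vi-vi\leftrightarrow viviv-vvv$, while $\eta_1\eta_2\eta_3<0$ produces $ivi-v\leftrightarrow vviv-vviv$. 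The main obstacle is exactly this combinatorial bookkeeping: for each admissible sign pattern one must determine, region by region, the precise count and order of the four kinds of points, and the higher-order-contact principle has to be invoked carefully to ensure that vertices and inflections interleave in the asserted sequence rather than in some other permutation.
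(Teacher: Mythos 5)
Your proposal is correct and takes essentially the same route as the paper: the paper also reduces to the model $F(x,y,t)=f(x,y)+t$ by observing that only $\mathcal{A}$-versality was used, and reads off the three configurations from the same data you invoke, namely the order-$2$ contact of $C_f$ with the inflection curves $\beta_i$ versus the order-$3$ contact with the tangent vertex curves $\gamma_1,\gamma_2$, the placement of one vertex per region by the transverse branches $\gamma_3,\gamma_4$, and the sign computations $T(\beta_1(s))=-\eta_1 s^3+\cdots$, $T(\beta_2(s))=24\eta_2 s^3+\cdots$, $T(\gamma_1(s))=\eta_3 s^4+\cdots$ governing the cases $\eta_3\eta_4\lessgtr 0$ and $\eta_1\eta_2\eta_3\lessgtr 0$. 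The final region-by-region bookkeeping you flag as the main obstacle is exactly what the paper settles pictorially in its figures, so your outline matches its proof in substance.
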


%

\subsection{Cases of codimension 2}

In this section, we will consider two geometric versal deformations of a $A_1^-$-singularity with its $2$-jet as in \eqref{eq:fA1-simplif} and with geometric codimension two. These are the cases where only one of the branches of $C_f$ exhibits, at the origin, an inflection or a vertex of first order.

\subsubsection{One of the branches of $C_f$ has an inflection at the origin}

Without loss of generality, let us suppose that $\alpha_1$ is the curve with an inflection of order one at the origin and $\alpha_2$ has neither a vertex nor an inflection at the origin. This is equivalent to stating that in \eqref{eq:fA1-naosimplif}, $\eta_1=\atz=0$ and $\eta_2$, $\eta_3$, and $\eta_4$ are non-zero.
 As we have seen before, we can assume a $2$-parameter deformation $F$ of $f$ to be of the form
\begin{eqnarray}\label{eq:Fcod2a}
F(x,y,s,t)=\sum_{i+j=0}^{k}\q{A}{ij}(s,t)x^{i-j}y^j +O(k+1),
\end{eqnarray}
with $\q{A}{ij}(0,0)=\q{a}{ij}$, $\q{a}{00}=\q{a}{10}=\q{a}{11}=\q{a}{20}=\q{a}{30}=0$, $\q{a}{21}=1$ and  $\q{A}{00}(s,t)=s$.

\begin{prop}\label{prop:extratosingi}
	Let $F$ be as in \eqref{eq:Fcod2a}. Then the stratum of singularities in the parameter space is locally a regular manifold of codimention one and tangent to the manifold $\{s=0\}$.
\end{prop}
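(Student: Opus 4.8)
The plan is to locate the singular stratum in parameter space as the set where the two defining conditions $A_{10}=A_{11}=0$ both hold, and to show this cuts out a regular curve. Recall from the stratification of $J^k(2,1)$ that a point $(x,y)$ is singular for $F_{(s,t)}$ precisely when $\Phi_F(x,y,s,t)\in\mathcal{S}$, i.e. when the coefficients of the $1$-jet vanish: $F_x(x,y,s,t)=F_y(x,y,s,t)=0$. So I would first write down the three equations $F=0$, $F_x=0$, $F_y=0$ as the full system describing a singular point sitting on the curve, and then eliminate the spatial variables $(x,y)$ to obtain the locus in the $(s,t)$-plane.

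First I would exploit the fact that $F$ is versal with $A_{00}(s,t)=s$, so that $F_s(0,0,0,0)\neq 0$. By the Implicit Function Theorem there is a smooth $T(x,y,t)$ with $F(x,y,T(x,y,t),t)\equiv 0$, reducing the three equations to the two equations $\bar F_x=\bar F_y=0$ in the variables $(x,y,t)$ after substituting $s=T$. Next I would analyze these two equations near the origin. The key observation is that the $2$-jet of $f$ is $xy+a_{22}y^2$, whose gradient $(y,\,x+2a_{22}y)$ vanishes only at the origin and has nondegenerate linearization; this Morse-type nondegeneracy means $F_x=F_y=0$ can be solved for $(x,y)$ as smooth functions of $(s,t)$ near the origin by a second application of the Implicit Function Theorem, giving a unique singular point $(x(s,t),y(s,t))$ for each parameter value.

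With the singular point parameterized, the singular stratum in the $(s,t)$-space is the set where this singular point actually lies on the curve, i.e. where $F(x(s,t),y(s,t),s,t)=0$. I would show this single equation defines a regular curve by computing its $1$-jet in $(s,t)$. Since $A_{00}(s,t)=s$ contributes the term $s$ and the remaining terms vanish to higher order at the singular point (the value of $F$ at its own critical point differs from $A_{00}$ only by quantities quadratic in the critical coordinates, which are themselves $O(s,t)$ in size), the linear part of the stratum equation is exactly $s+O(2)$. Hence the stratum is smooth, of codimension one, with tangent space $\{s=0\}$, which is the assertion.

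The main obstacle I anticipate is controlling the linear part of the stratum equation carefully enough to conclude tangency to $\{s=0\}$ rather than merely regularity. Regularity is immediate once one checks that some first partial is nonzero, but pinning down that the \emph{leading} linear term is precisely $s$ (so that $\partial/\partial t$ of the stratum equation vanishes at the origin) requires verifying that the substitution $s=T(x,y,t)$ together with evaluation at the critical point $(x(s,t),y(s,t))$ does not introduce a spurious linear-in-$t$ contribution. Concretely, one must check that $\frac{\partial}{\partial t}\big[F(x(s,t),y(s,t),s,t)\big]$ vanishes at the origin; this follows because at a critical point the $x$- and $y$-derivatives drop out by the chain rule, leaving $F_s\,\partial_t s + F_t$, and the normalization $A_{00}=s$ together with $A_{ij}(0)=a_{ij}$ forces the pure $t$-dependence of $F$ to start at order $\geq 2$ or to be absorbable. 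Verifying this cancellation is the delicate bookkeeping step, but it is a finite jet computation once the Implicit Function Theorem reductions are in place.
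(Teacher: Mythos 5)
Your proof is correct, but it eliminates the three equations $F=F_x=F_y=0$ in a different order than the paper. You solve the critical-point equations $F_x=F_y=0$ for $(x,y)=(x(s,t),y(s,t))$ in one stroke, using the Morse nondegeneracy of the Hessian of the $2$-jet $xy+a_{22}y^2$ (determinant $-1$), and then describe the stratum implicitly as the zero set of the discriminant $G(s,t)=F(x(s,t),y(s,t),s,t)$; the chain rule at a critical point gives $G_s(0,0)=F_s(0,0,0,0)=1$ and $G_t(0,0)=F_t(0,0,0,0)=\partial A_{00}/\partial t\,(0,0)=0$, whence $j^1G=s$ and $\{G=0\}$ is a regular curve tangent to $\{s=0\}$. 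The paper instead runs a cascade of three scalar applications of the implicit function theorem: it first solves $F=0$ for $s=S(x,y,t)$ (using $A_{00}(s,t)=s$), then $F_1(x,y,t)=F_x(x,y,S,t)=0$ for $y=Y(x,t)$, then $F_2(x,t)=F_y(x,Y,S,t)=0$ for $x=X(t)$, producing a parametrization $\Xi(t)=(S(X(t),Y(X(t),t),t),t)$ of the stratum whose first coordinate has vanishing $1$-jet. Your route buys a cleaner conceptual statement --- the stratum as a discriminant, with the Morse condition invoked once and the tangency read off from $F_t(0,0,0,0)=0$ --- whereas the paper's sequential elimination produces exactly the auxiliary function $S(x,y,t)$ that is reused verbatim in the subsequent construction of the second-order inflection stratum. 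Two small wrinkles in your write-up, neither of which is a gap: the reduction $s=T(x,y,t)$ in your first paragraph is never used by the argument you actually run, in which $s$ remains an independent variable throughout; and in your final check the term $F_s\,\partial_t s$ is spurious, since $s$ and $t$ are independent coordinates of $G$, so $\partial_t G(0,0)$ equals exactly $F_t(0,0,0,0)=\partial A_{00}/\partial t\,(0,0)$, which vanishes identically by the normalization $A_{00}(s,t)=s$ --- no ``order $\geq 2$ or absorbable'' hedging is needed.
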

\begin{proof}
	The singular stratum we desire is given by 
	$$
	\Sigma=\{(s,t)\in \mathbb{R}^2|\exists (x,y)\in \mathbb{R}^2 ; F(x,y,s,t)=\der{F}{x}(x,y,s,t)=\der{F}{y}(x,y,s,t)=0\}.
	$$
	Since $\q{A}{00}(s,t)=s$, we have $\der{F}{s}(0,0)=1$. By the implicit function theorem, we can locally parametrize the zero set of $F$ by $
	(x,y,t)\mapsto (x,y,S(x,y,t),t).
	$
	That is, $$F(x,y,S(x,y,t),t)=0,$$ for every $(x,y,t)$ near the origin.
	Note that, by differentiating the above equation, we can find the derivatives of $S$ at the origin. For example, $\der{S}{x}(0,0,0)=\der{S}{y}(0,0,0)=0$.
	
	Now let $F_1(x,y,t)=\der{F}{x}(x,y,S,t)$, where we identify $S=S(x,y,t)$. Thus, from $\der{S}{y}(0,0,0)=0$, we obtain
	$$
	\der{F_1}{y}(0,0,0)=1.
	$$
	Again, by the implicit function theorem, the zero set of $F_1$ is locally a regular manifold that can be parametrized by
	$
	(x,t)\mapsto (x,Y(x,t),t).
	$
	That is, 
	$$F_1(x,Y(x,t),t)=0,$$ for every $(x,t)$ near the origin.
	Again, by differentiating the above equation, we can find the derivatives of $Y$ at the origin, such as $\der{Y}{x}(0,0)=0$. 
	
	Let $F_2(x,t)=\der{F}{y}(x,Y,S,t)$. Thus, from $\der{S}{x}(0,0,0)=\der{Y}{x}(0,0)=0$, we obtain
	$$
	\der{F_2}{x}(0,0,0)=1.
	$$
	Again by the implicit function theorem, the zero set of $F_2$ is locally a regular manifold that can be parametrized by
	$$
	t\mapsto (X(t),t).
	$$
	That is, $F_2(X(t),t)=0$, for every $t$ near the origin.
	Thus, we then have that $\Sigma$ is given by the manifold parameterized by
	$$
	\Xi(t)=(S(X(t),Y(X(t),t),t),t),
	$$
	which is regular, and observing that the first jet of $\Xi$ is null in the first coordenate, $\Xi$ is tangent to the manifold $\{s=0\}$.
\end{proof}
By the proposition above, we can make a change of coordinates in the parameter space such that the singular stratum is the manifold $\{s=0\}$ itself. Note that, as $A_1^-$ singularities are stable, the entire singular stratum is composed of $A_1^-$ singularities. Moreover, for each point $(0,t)$ in the singular stratum, we can continuously perform rotations, translations, and homotheties on the parameter space such that the origin is always the singularity and the curve $\alpha_1$ is always tangent to the $x$-axis. So we can assume
\begin{equation}\label{eq:simpl}
	\q{A}{00}(0,t)=\q{A}{10}(0,t)=\q{A}{11}(0,t)=\q{A}{20}(0,t)=0 \ \ \text{and}\ \  \q{A}{21}(0,t)=1.
\end{equation}


\begin{prop}\label{prop:transvcod2ii}
Let $F$ be as in \eqref{eq:Fcod2a} and satisfying \eqref{eq:simpl}. Let $\mathcal{\tilde{I}}$ the estratum of the curves which is a $A_1^-$ singularity and one of the branches has an first-order inflexion at singularity. Then $\Phi_f$ is transversal to the stratum $\mathcal{L}_0\cap \mathcal{\tilde{I}}$ at origin if and only if $\der{\q{A}{30}}{t}(0,0)\neq0$.
\end{prop}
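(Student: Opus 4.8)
The plan is to interpret $\Phi_f$ as the Monge--Taylor map of the deformation, $\Phi_F(x,y,s,t)=j^k_{(x,y)}F(\cdot,\cdot,s,t)\in J^k(2,1)$, and to reduce the transversality to the linear independence of four explicit covectors at the origin. First I would describe $\mathcal{L}_0\cap\tilde{\mathcal I}$ near $P=\Phi_f(0,0)$ by four defining functions: $c_{00}$ (the singular point lies on $C_f$), $c_{10}$ and $c_{11}$ (the point is critical, i.e. lies in $\mathcal S$), and the invariant $\eta_1$ (one branch carries a first-order inflection). Since $A_1^-$ is an open condition on $\mathcal S$ (the discriminant $c_{21}^2-4c_{20}c_{22}>0$ is open), it imposes no extra equation near $P$, and by Proposition~\ref{prop:conditions} the inflection of $\alpha_1$ is recorded exactly by $\eta_1=0$ in the normalisation $c_{20}=0,\ c_{21}=1$. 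Thus $\mathcal{L}_0\cap\tilde{\mathcal I}$ has codimension $4$, and transversality of $\Phi_F$ to it at the origin is equivalent to the linear independence, in the $4$-dimensional space $T^*_0(\R^2\times\R^2)$, of the differentials of $c_{00}\circ\Phi_F,\ c_{10}\circ\Phi_F,\ c_{11}\circ\Phi_F,\ \eta_1\circ\Phi_F$.

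Next I would compute these differentials at the origin. Since $c_{00}\circ\Phi_F=F$, $c_{10}\circ\Phi_F=\der{F}{x}$ and $c_{11}\circ\Phi_F=\der{F}{y}$, the normalisations $\q{a}{20}=0,\ \q{a}{21}=1$ together with \eqref{eq:simpl} give
$$d(c_{00}\circ\Phi_F)_0=ds,\quad d(c_{10}\circ\Phi_F)_0=dy+\lambda\,ds,\quad d(c_{11}\circ\Phi_F)_0=dx+2\add\,dy+\mu\,ds,$$
where $\lambda=\der{\q{A}{10}}{s}(0,0)$ and $\mu=\der{\q{A}{11}}{s}(0,0)$. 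The crucial point is that all three have vanishing $dt$-component: indeed $\q{A}{00}(s,t)=s$ gives $\der{\q{A}{00}}{t}=0$, while \eqref{eq:simpl} forces $\q{A}{10}(0,t)=\q{A}{11}(0,t)=0$, hence $\der{\q{A}{10}}{t}(0,0)=\der{\q{A}{11}}{t}(0,0)=0$. Consequently these three covectors span exactly $\langle dx,dy,ds\rangle$.

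Finally I would isolate the $dt$-component of the fourth covector. Restricting $\eta_1\circ\Phi_F$ to the line $\{x=y=s=0\}$ gives the $\eta_1$-invariant of the germ $F(\cdot,\cdot,0,t)$, which by \eqref{eq:simpl} is already in the normal form \eqref{eq:fA1-simplif}; hence $\eta_1\circ\Phi_F(0,0,0,t)=\q{A}{30}(0,t)$, and therefore $\der{(\eta_1\circ\Phi_F)}{t}(0)=\der{\q{A}{30}}{t}(0,0)$. Because the first three covectors span $\langle dx,dy,ds\rangle$, the four are independent if and only if $d(\eta_1\circ\Phi_F)_0\notin\langle dx,dy,ds\rangle$, i.e. if and only if its $dt$-component $\der{\q{A}{30}}{t}(0,0)$ is non-zero. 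This gives the claimed equivalence.

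The main obstacle I anticipate is bookkeeping rather than conceptual: correctly identifying $\eta_1$ as the local defining function of $\tilde{\mathcal I}$ inside $\mathcal S$ near $P$ (its differential is independent of those of $c_{00},c_{10},c_{11}$ since $\eta_1$ reduces to $c_{30}$ in the normalised chart), and checking carefully that the normalisations $\q{a}{20}=0$, $\q{a}{21}=1$ together with \eqref{eq:simpl} are precisely what annihilate the $dt$-components of the first three covectors, so that the entire $t$-dependence of the transversality condition is concentrated in $\q{A}{30}$.
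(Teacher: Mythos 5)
Your proof is correct and takes essentially the same route as the paper: the paper computes the $10\times 4$ Jacobian of the Monge--Taylor family map and checks that its columns together with the tangent space of the stratum (tangent to $\{c_{10}=c_{11}=c_{30}=0\}$) span the jet space, while you check the dual condition that the pullbacks of the four defining covectors $c_{00},c_{10},c_{11},\eta_1$ are independent --- the same linear-algebra reduction, with \eqref{eq:simpl} annihilating the $dt$-components of the first three and concentrating everything in $\der{\q{A}{30}}{t}(0,0)$. Your restriction-to-the-$t$-axis computation of the $dt$-component of $d(\eta_1\circ\Phi_F)$, where \eqref{eq:simpl} keeps the jet in the normal form \eqref{eq:fA1-simplif} so that $\eta_1\circ\Phi_F(0,0,0,t)=\q{A}{30}(0,t)$, is a clean justification of the tangency claim the paper merely asserts.
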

\begin{proof}
The derivative matrix of $\Phi_f$ at the origin is given by
	\begin{equation}\label{eq:dPhi}
		\left(	\begin{array}{cccc}\vspace{.3cm}
			0&0& 1&0 \\\vspace{.3cm}
			0&1& \der{\q{A}{10}}{s}(0,0)&0\\\vspace{.3cm}
			1&2 \q{a}{22}& \der{\q{A}{11}}{s}(0,0)&0\\\vspace{.3cm}
			0&2 \q{a}{31}& 2\der{\q{A}{20}}{s}(0,0)&0\\\vspace{.3cm}
			2 \q{a}{31}&2 \q{a}{32}& \der{\q{A}{21}}{s}(0,0)&\der{\q{A}{21}}{t}(0,0)\\\vspace{.3cm}
			2 \q{a}{32}&6 \q{a}{33}& 2\der{\q{A}{22}}{s}(0,0)&2\der{\q{A}{22}}{t}(0,0)\\\vspace{.3cm}
			24 \q{a}{40}&6 \q{a}{41}& 6\der{\q{A}{30}}{s}(0,0)&6\der{\q{A}{30}}{t}(0,0)\\\vspace{.3cm}
			6 \q{a}{41}&4 \q{a}{42}& 2\der{\q{A}{31}}{s}(0,0)&2\der{\q{A}{31}}{t}(0,0)\\\vspace{.3cm}
			4 \q{a}{42}&6 \q{a}{43}& 2\der{\q{A}{32}}{s}(0,0)&2\der{\q{A}{32}}{t}(0,0)\\\vspace{.3cm}
			6 \q{a}{43}&24 \q{a}{44}& 6\der{\q{A}{33}}{s}(0,0)&6\der{\q{A}{33}}{t}(0,0)
		\end{array} \right).
	\end{equation}
	The stratum $\mathcal{\tilde{I}}$ near $P$ is tangent to $\{\auz=\auu=a_{30}=0\}$. So, for $\Phi_f$ to be transversal to $\mathcal{L}_0\cap\mathcal{\tilde{I}}$, it is necessary and sufficient to exist four columns of the above matrix being linearly independent with the set of vectors $\{e_4,e_5,e_6,e_8,e_9,e_{10}\}$ in $\mathbb{R}^{10}$, where $\{e_i\}_{i=1,...,10}$ is the canonical basis of $\mathbb{R}^{10}$.
	It is easy to notice that it is equivalent to $\der{\q{A}{30}}{t}(0,0)\neq0$.
\end{proof}

%
%

Let $F$ be a two parameters deformation of $f$ as in \eqref{eq:Fcod2a} and satisfyng \eqref{eq:simpl}. By Proposição \ref{prop:I2-V2} and the hypotheses $\atz=0$ and $\eta_2\neq0$, we know that the stratum of second order inflections in $\mathbb{R}^4$ is locally composed at the origin by a single regular manifold of codimension three an the stratum of the second order vertices is empty (besides the singular stratum).

Thus, we shall find an explicit form for the projection of the second order inflection curve onto the parameter space. We know that the stratum of first order inflections is given by the zeros of the functions $F$ and
$I$. Moreover, the stratum of second-order inflections is given by $F=I=I_2=0$, where

$$\begin{array}{ll}
	I_2=&-3F_{yy}^2F_{x}^3-3F_{y}F_{yy}F_{xy}(-3F_{x}F_{xy}+F_{y}F_{xx})\\&+ F_{y}(F_{yyy}F_{x}^3-F_{y}(3F_{x}^2F_{xyy}+F_{x}(6F_{xy}^2-3F_{y}F_{xxy})+F_{y}(-3F_{xy}F_{xx}+F_{y}F_{xxx})))
\end{array}$$
in $\mathbb{R}^4$. 
 
From the proof of Proposition \ref{prop:extratosingi}, there exists a smooth function $S(x,y,t)$ such that $F(x,y,S(x,y,t),t)=0$, for every $(x,y,t)$ near the origin. This implies that, to find the first-order inflections in $\mathbb{R}^4$, it suffices to find the zeros of the functions $\bar{I}_1(x,y,t)=I_1(x,y,S,t)$, where we identify $S(x,y,t)$ simply by $S$.

By direct computations, we obtain that the second jet of $\bar{I}_1$ is given by 
$$
j^2(\bar{I}_1)=2y(x+a_{22}\,y)
$$
Since we already know that the zeros of $\bar{I}_1$ are two smooth manifolds of codimension two, observing the above second jet, we can conclude that the zeros of $\bar{I}_1$ can be parametrized by $(x,t)\mapsto(x,Y(x,t),t)$ and $(y,t)\mapsto(X(y,t),y,t)$. Clearly, using the fact that $\bar{I}_1(x,Y,t)$ and $\bar{I}_1(X,y,t)$ are identically zero, we can calculate the derivatives of $Y(x,t)$ and $X(y,t)$.

Now, find the manifold representing the stratum of second-order inflections in $\mathbb{R}^4$ is equivalent to find the zeros of the functions
$\tilde{I}_2(x,t)=I_2(x,Y(x,t),S(x,Y(x,t),t),t)$ and
$\tilde{I}_3(y,t)=I_2(X(y,t),y,S(X(y,t),y,t),t)$ near the origin, which we know that one of them is a manifolds of codimension 1 in $\mathbb{R}^2$ and the other one is empty near origin. We also know that the singular stratum shall appear in its zero sets.

Direct calculations show that 
$$
j^5(\tilde{I}_3)=-6\add x^4\left(\eta_2 + \cdots x+\cdots t\right)
$$
and
$$
j^5(\tilde{I}_2)=-6x^4\left(4a_{40}\,x+\frac{\partial A_{30}}{\partial t}(0,0) \,t\right)
$$
Recalling that the singular stratum is given by ${x=0}$ and noting that it is prominently featured in both jets shown above, we can conclude that the zeros of $\tilde{I}_3$ do not pass through the origin (except on the singular stratum), and that locally, the zeros of $\tilde{I}_2$ can be expressed as $t\mapsto(X(t),t)$, since $a_{40}$ does not vanish.
	
Hence, the stratum of second-order inflections can be parameterized by
$$
t\mapsto(X(t),Y(X(t),t),S(X(t),Y(X(t),t),t),t).
$$
Projecting onto the parameter space - last two coordinates - we see that such stratum is a smooth curve tangent to the $t$-axis. Direct calculations show that the fourth jet of this curve is given by

\begin{equation}\label{eq:inf2}
	\left(\dfrac{{\frac{\partial A_{30}}{\partial t}}(0,0)^4}{256 \,a_{40}^3} \, t^4,t\right).
\end{equation}
Assuming $\frac{\partial A_{30}}{\partial t}(0,0)\neq0$, then the stratum of second-order inflections is a smooth curve with fourth-order tangency with the stratum of singularities (See Figure \ref{fig:bifurcacao}).

\begin{figure}[h!]
	\centering
	\includegraphics[width=4cm]{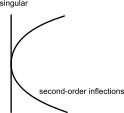}
	\caption{Geometric bifurcation set of a $A_1^-$-singularity when one of the branches has a inflection at the singular point.}
\label{fig:bifurcacao}
\end{figure}

We have previously characterized the bifurcation set of \(F\). We shall now proceed to analyze the behavior of $F_{(s,t)}=0$ within individual strata.

By construction, for each $(t,0)$ near the origin, the zeros of $F_{(t,0)}$ are two branches, one of them being parallel to the $x$-axis at the origin. Thus, for each $t$ near the origin, there exists a function $y_t(x)$ such that $F(x,y_t(x),0,t)$ is identically zero. Let $y(x,t)=y_t(x)$. \linebreak Using the fact of that $F(x,y(x,t),0,t)$ is identically zero and direct calculations, we can conclude that $\dfrac{\partial y^3}{\partial x^2 \partial t}(0,0)=-2\frac{\partial A_{30}}{\partial t}(0,0)$ and $\dfrac{\partial y^3}{\partial x^3}(0,0)=-6 a_{40}=-6\eta_3$ (both nonzero, by hypothesis).

The first identity above reveals that, if $\frac{\partial A_{30}}{\partial t}(0,0)>0$, then the concavity of $y_t$ at the origin changes from positive to negative as $t$ changes from negative to positive. The second identity, in turn, together with \eqref{eq:inf2}, indicates the asymptotic behavior of $\alpha_1$ near the origin of the parameters space and also the behavior of the second-order inflection curve at the bifurcation set.

We still need to verify the local behavior of the other branch of the zero set of $F$ over the $t$-axis. We already know that this branch is transversal to the $x$-axis and that there exists a function $x_t(y)$ such that $F(x_t(y),y,0,t)$ is identically zero. It follows from direct calculations that $x_0'(0)=-a_{22}$ and $x_0''(0)=2\eta_2$, which does not vanish by assumption. Thus, the concavity of $\alpha_2$ depends directly on $\eta_2$.

To conclude the study of the behaviors in each stratum, we proceed as the following: First we consider the horizontal section \(t=0\), where we can verify the appearance of inflections and vertices as in \cite{GD}, case \(H5\), and which can occur in two different ways, depending only on the sign of \(\eta_3 \eta_4\).

For each of these cases, we analyze the sections \(t=k\), \(k\neq0\), and finally the behavior over the stratum of second-order inflections. Clearly, as we pass through this stratum, the second-order inflection turns into two first-order inflections on one side of the stratum, and none on the other side.

With all this analysis, we have two possible behaviors of the deformation \(F\) - up to isometries in the parameter space and/or in their domains - well defined and given in Figures \ref{fig:bifurcacaovert} and \ref{fig:bifurcacaovert2}, with their only difference being the sign of the product \(\eta_3 \eta_4\). 

\begin{figure}[h!]
\centering
\includegraphics[width=7.6	cm]{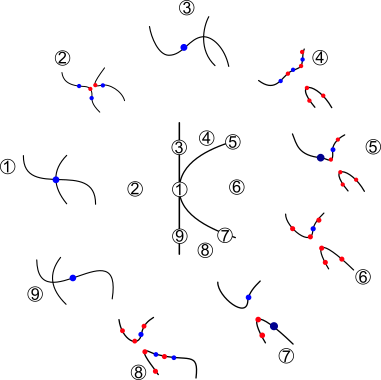}
\caption{Bifurcation set when $\eta_3\eta_4>0$. Red cdots are vertices, blue cdots are inflections and purple cdots are second-order inflections}
\label{fig:bifurcacaovert}
\end{figure}
\begin{figure}[h!]
\centering
\includegraphics[width=8cm]{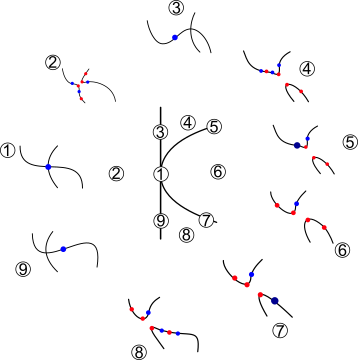}
\caption{Bifurcation set when $\eta_3\eta_4<0$. Red cdots are vertices, blue cdots are inflections and purple cdots are second-order inflections.}
\label{fig:bifurcacaovert2}
\end{figure}
 Since, in the above approach, we only used the hypothesis of $F_{(s,t)}$ being $\mathcal{A}$-versal and $\frac{\partial A_{30}}{\partial t}(0,0)\neq0$, we conclude the following result:

\begin{theo}\label{teo:cod2i} 
	Let $f:(\mathbb{R}^2,(0,0))\to(\mathbb{R},0)$ be a smooth function germ with an $A_1^-$ singularity as in \eqref{eq:fA1-simplif}, where $\eta_1=0$ and $\eta_2,\eta_3,\eta_4$ are nonzero. Then, generically, any 2-parameter $\mathcal{A}$-versal deformation of $f$ is $FRS$-equivalent to $F:(\mathbb{R}^4,0)\to(\mathbb{R},0)$ with $F(x,y,s,t)=f(x,y)+ x^3\, t+s$. The possible configurations over the parameter space of $F_{(s,t)}$ are given in Figures \ref{fig:bifurcacaovert} and \ref{fig:bifurcacaovert2}.
\end{theo}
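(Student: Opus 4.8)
The plan is to treat $F(x,y,s,t)=f(x,y)+x^3t+s$ as the model of the FRS-class and to show that any generic $\mathcal{A}$-versal two-parameter deformation of $f$ reduces to it. First I would normalize the deformation exactly as in the discussion preceding the statement: by Proposition \ref{prop:extratosingi} the singular stratum is a regular curve tangent to $\{s=0\}$, so after a coordinate change in the parameter space it becomes $\{s=0\}$ and \eqref{eq:simpl} holds along it. The genericity hypothesis is then the transversality condition $\frac{\partial A_{30}}{\partial t}(0,0)\neq0$ of Proposition \ref{prop:transvcod2ii}, which holds on an open dense subset of versal deformations; the model satisfies it because $A_{30}(s,t)=t$ gives $\frac{\partial A_{30}}{\partial t}(0,0)=1$, while $\eta_1=0$ and $\eta_2,\eta_3,\eta_4\neq0$ (in particular $a_{40}=\eta_3\neq0$, since $a_{30}=\eta_1=0$) are inherited from $f$.

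Next I would describe the bifurcation set in the $(s,t)$-plane and check that $F$ and the model share the same one up to homeomorphism. As already computed, it consists of the singular line $\{s=0\}$ together with the second-order inflection curve \eqref{eq:inf2}, which is smooth and has fourth-order tangency with $\{s=0\}$ (Figure \ref{fig:bifurcacao}); by Proposition \ref{prop:I2-V2} the hypotheses $\eta_2\eta_3\eta_4\neq0$ guarantee that no second-order vertex stratum appears beyond the singular one. These two curves cut the parameter plane into the open regions, the two arcs, and the origin that together constitute the stratification $(S_1,0)$, and the model produces a combinatorially identical picture.

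I would then analyze $F_{(s,t)}=0$ stratum by stratum. On the section $\{t=0\}$ the deformation restricts to the codimension-one situation of \cite{GD}, case $H5$, whose two possibilities are governed solely by the sign of $\eta_3\eta_4$, in accordance with Theorem \ref{teo:geocod2casei}. Using the local data already extracted — the concavity change of the $x$-axis branch encoded by $\frac{\partial^3 y}{\partial x^2\partial t}(0,0)=-2\frac{\partial A_{30}}{\partial t}(0,0)$ and $\frac{\partial^3 y}{\partial x^3}(0,0)=-6\eta_3$, the concavity of the transversal branch governed by $\eta_2$, and the splitting of a second-order inflection into two first-order inflections across \eqref{eq:inf2} — I would record the number and relative position of inflections and vertices on each stratum. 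This produces exactly the configurations of Figures \ref{fig:bifurcacaovert} and \ref{fig:bifurcacaovert2}, distinguished only by whether $\eta_3\eta_4>0$ or $\eta_3\eta_4<0$. Finally, to obtain the FRS-equivalence I would build a stratified homeomorphism $k$ of the parameter planes sending singular line to singular line and inflection curve to inflection curve; since both bifurcation sets are a line with a fourth-order tangent arc lying on a prescribed side, such a $k$ exists, and by the stratum-by-stratum analysis the paired curves are diffeomorphic, carry equal counts of inflections and vertices, and exhibit the same incidences.

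The main obstacle I anticipate is verifying property (iii), the matching of \emph{relative positions}: one must compare the orders in which inflections and vertices occur along each branch, which is dictated by the contact orders of the inflection curves (ordinary tangency with $C_f$) against the vertex curves (second-order tangency) — precisely the interleaving recorded by the $vivi$-type notation. Controlling these orderings uniformly across every stratum, and confirming that the sign of $\eta_3\eta_4$ is the only surviving modulus so that no configuration beyond the two figures can occur, is the delicate part; everything else is a direct consequence of the transversality of $\Phi_F$ furnished by $\frac{\partial A_{30}}{\partial t}(0,0)\neq0$ together with the normalizations above.
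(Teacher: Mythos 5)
Your proposal is correct and follows essentially the same route as the paper: the same normalization via Proposition \ref{prop:extratosingi} and \eqref{eq:simpl}, the same genericity condition $\frac{\partial A_{30}}{\partial t}(0,0)\neq 0$ from Proposition \ref{prop:transvcod2ii}, the same bifurcation set (singular line plus the second-order inflection curve \eqref{eq:inf2} with fourth-order tangency), and the same stratum-by-stratum analysis reducing the $t=0$ section to case $H5$ of \cite{GD} with the sign of $\eta_3\eta_4$ as the only surviving modulus. Your closing construction of the stratified homeomorphism simply makes explicit the FRS-equivalence step that the paper leaves implicit.
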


\subsubsection{One of the branches has a vertex at the origin}

Let us now turn to the study of the second and last geometric case of codimension two of a $A_1^-$-singularity. Without loss of generality, we assume that $\alpha_1$ is the curve with a first-order vertex at the origin and $\alpha_2$ has neither a vertex nor an inflection at the origin. This is equivalent to stating that, in \eqref{eq:fA1-naosimplif}, $\eta_3=0$, and $\eta_1,\eta_2,\eta_4$ are not zero. 

As we have seen, we can assume $F$ to be of the form
\begin{eqnarray}\label{eq:Fcod2ii}
	F(x,y,s,t)=\sum_{i+j=0}^{k}\q{A}{ij}(s,t)x^{i-j}y^j +O(k+1),
\end{eqnarray}
with $\q{A}{ij}(0,0)=\q{a}{ij}$, $\q{a}{00}=\q{a}{10}=\q{a}{11}=\q{a}{20}=0$, $\q{a}{21}=1$, $\q{a}{40}=-\add \atz^2+\atz \atu$, and $\q{A}{00}(s,t)=s$.

\begin{prop}\label{prop:extratosing}
	Let $F$ be as in \eqref{eq:Fcod2ii}. Then the stratum of singularities in the parameter space is locally a regular manifold of codimention one and tangent to the manifold $\{s=0\}$.
\end{prop}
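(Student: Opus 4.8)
The plan is to observe that this statement is, word for word, the one already proved in Proposition \ref{prop:extratosingi}, and that its proof transfers essentially verbatim. The argument given there for the inflection case never invoked the condition $\eta_1 = \atz = 0$ that characterized \eqref{eq:Fcod2a}; it used only the normalization $\q{A}{00}(s,t) = s$ together with the shape of the $2$-jet of $f$ in $(x,y)$, namely $xy + \add\,y^2$, i.e. the facts $\q{a}{10} = \q{a}{11} = \q{a}{20} = 0$ and $\q{a}{21} = 1$. Since the deformation \eqref{eq:Fcod2ii} shares exactly these low-order features with \eqref{eq:Fcod2a}, I would simply re-run the same cascade of implicit function theorem applications.

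Concretely, I would first use $F_s(0,0,0,0) = \frac{\partial A_{00}}{\partial s}(0,0) = 1$ to solve $F = 0$ for $s = S(x,y,t)$, recording $S_x(0) = -F_x(0)/F_s(0) = 0$ and $S_y(0) = 0$ (these vanish because $\q{a}{10} = \q{a}{11} = 0$). Setting $F_1 = F_x(x,y,S,t)$, the identity $\tfrac{\partial F_1}{\partial y}(0) = F_{xy}(0) = \q{a}{21} = 1$ lets me solve $F_1 = 0$ for $y = Y(x,t)$, and $Y_x(0) = 0$ since $F_{xx}(0) = 2\q{a}{20} = 0$. Finally, with $F_2 = F_y(x,Y,S,t)$, the vanishing of $S_x(0)$, $S_y(0)$, $Y_x(0)$ forces $\tfrac{\partial F_2}{\partial x}(0) = F_{yx}(0) = 1$, so $F_2 = 0$ solves for $x = X(t)$. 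The singular stratum is then parametrized by $\Xi(t) = (S(X(t),Y(X(t),t),t),\,t)$, which is regular of codimension one. For the tangency claim, I would differentiate its first coordinate $\sigma(t) = S(X(t),Y(X(t),t),t)$ at $t=0$; because $S_x(0) = S_y(0) = 0$ this collapses to $\sigma'(0) = S_t(0) = -F_t(0)/F_s(0)$, and $F_t(0,0,0,0) = \frac{\partial A_{00}}{\partial t}(0,0) = 0$ since $\q{A}{00}(s,t) = s$. Hence $\Xi$ has vanishing first jet in its $s$-coordinate and $\Sigma$ is tangent to $\{s=0\}$.

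The one point that genuinely requires care, rather than blind copying, is the following discrepancy between the two normalizations: in \eqref{eq:Fcod2a} one had $\atz = 0$, whereas in the present vertex case $\eta_1 = \atz \neq 0$, so the coefficient $\atz$ is now nonzero (and $\q{a}{40} = -\add\,\atz^2 + \atz\,\atu$ is prescribed accordingly). I expect the \emph{only} real obstacle to be checking that the proof of Proposition \ref{prop:extratosingi} does not secretly depend on $\atz = 0$. Upon inspection it does not: the coefficient $\atz$, and likewise $\q{a}{40}$ and every other third- and higher-order coefficient, first enters $F$ only through its third-order (and higher) derivatives in $(x,y)$, whereas every derivative appearing in the cascade above is computed from the $1$- and $2$-jets of $F$ alone. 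The vertex-specific constraints $\eta_3 = 0$ and the value of $\q{a}{40}$ are therefore invisible to this argument, and the proof closes exactly as in Proposition \ref{prop:extratosingi}.
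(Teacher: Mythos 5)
Your proposal is correct and follows exactly the route the paper takes: the paper's own proof of this proposition consists of the single remark that it is analogous to Proposition \ref{prop:extratosingi}, and your re-run of the implicit-function-theorem cascade is precisely that analogy made explicit. Your added check --- that every derivative used in the cascade comes from the $1$- and $2$-jets of $F$, so the change from $\atz=0$ to $\eta_3=0$ with $\q{a}{40}$ prescribed is invisible to the argument --- is exactly the point that justifies the paper's ``follows analogously.''
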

\begin{proof} The proof of this result follows analogously to the proof of Proposition \ref{prop:extratosingi}.
\end{proof}

By the above proposition and for the same reasons as the previous case, we can assume that the singular stratum is the manifold $\{s=0\}$ and also
\begin{eqnarray}\label{eq:simpl2}
	\q{A}{00}(0,t)=\q{A}{10}(0,t)=\q{A}{11}(0,t)=\q{A}{20}(0,t)=0 \ \ \text{and}\ \  \q{A}{21}(0,t)=1.
\end{eqnarray}

\begin{prop}\label{prop:geomversal2}
	Let $F$ be as in \eqref{eq:Fcod2ii} and satisfying \eqref{eq:simpl2}. Then $\Phi_f$ is transversal to the stratum $\mathcal{S}\cap \{\eta_3=0\}$ if and only if $\der{\zeta}{t_i}(0,0)=0$, for some $i=1,...,n-1$, where $\zeta(s,t)=\q{A}{22}(s,t)\q{A}{30}^2(s,t)-\q{A}{30}(s,t) \q{A}{31}(s,t)+\q{A}{40}(s,t)$.
\end{prop}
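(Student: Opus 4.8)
The plan is to reduce the transversality assertion to a rank condition on the Jacobian of the Monge--Taylor map $\Phi_f$ of the family $F$ at the origin, in direct parallel with the proof of Proposition~\ref{prop:transvcod2ii}. Because $\eta_3$ involves the fourth--order coefficient $\q{a}{40}$, I would carry out the computation in $J^4(2,1)$ rather than $J^3(2,1)$, and describe the stratum $\mathcal{S}\cap\{\eta_3=0\}$ near $P$ as the common zero set of the three functions $c_{10}$, $c_{11}$ and $\eta_3=c_{22}c_{30}^2-c_{30}c_{31}+c_{40}$. Since the differential of this last function has a nonzero $dc_{40}$--component, the three functions cut out a smooth codimension--three stratum at $P$, and transversality of $\Phi_f$ to it is equivalent to the linear independence in $(\R^{2+n})^\ast$ of the pulled--back differentials $d(c_{10}\circ\Phi_f)$, $d(c_{11}\circ\Phi_f)$, $d(\eta_3\circ\Phi_f)$; that is, to the $3\times(2+n)$ matrix $M$ formed by them, with columns indexed by $x,y,s,t_1,\dots,t_{n-1}$, having rank three.

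First I would record the first two rows of $M$. Using the shift identities $\der{}{x}(c_{lm}\circ\Phi_f)(0)=(l+1-m)\,a_{l+1,m}$ and $\der{}{y}(c_{lm}\circ\Phi_f)(0)=(m+1)\,a_{l+1,m+1}$, together with $\der{}{s}(c_{lm}\circ\Phi_f)(0)=\der{\q{A}{lm}}{s}(0,0)$ and $\der{}{t_i}(c_{lm}\circ\Phi_f)(0)=\der{\q{A}{lm}}{t_i}(0,0)$, the $c_{10}$-- and $c_{11}$--rows are exactly those of \eqref{eq:dPhi}. Here the normalization \eqref{eq:simpl2} is decisive: since $\q{A}{10}(0,t)=\q{A}{11}(0,t)=0$ along the singular stratum, every $t_i$--derivative of $c_{10}\circ\Phi_f$ and $c_{11}\circ\Phi_f$ vanishes, so the first two rows are supported only on the $x$--, $y$-- and $s$--columns, and there the $(x,y)$--block $\bigl(\begin{smallmatrix}0&1\\1&2\add\end{smallmatrix}\bigr)$ is invertible, with determinant $-1$.

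The core of the argument is the third row. Since $\q{a}{30}=\eta_1\neq0$ in the present case, the gradient of $\eta_3$ at $P$ is genuinely nonlinear, namely $d\eta_3|_P=\atz^2\,dc_{22}+(2\add\,\atz-\atu)\,dc_{30}-\atz\,dc_{31}+dc_{40}$. Pulling this back and evaluating on a $t_i$--direction, where $\der{}{t_i}$ does not shift jet indices, gives $d(\eta_3\circ\Phi_f)(\der{}{t_i})=\atz^2\der{\q{A}{22}}{t_i}+(2\add\,\atz-\atu)\der{\q{A}{30}}{t_i}-\atz\der{\q{A}{31}}{t_i}+\der{\q{A}{40}}{t_i}$ at $(0,0)$; differentiating $\zeta=\q{A}{22}\q{A}{30}^2-\q{A}{30}\q{A}{31}+\q{A}{40}$ directly and evaluating at the origin shows this equals exactly $\der{\zeta}{t_i}(0,0)$. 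This identity is the linchpin: the $t_i$--entries of the third row of $M$ are precisely the derivatives of $\zeta$ along the parameter directions tangent to the singular stratum.

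Finally I would clear the first two rows from the $s$-- and $t_i$--columns using the invertible $(x,y)$--block, after which the rank of $M$ is controlled by the reduced third row, whose $t_i$--entries remain $\der{\zeta}{t_i}(0,0)$; tracking the residual $s$-- and spatial contributions through \eqref{eq:simpl2} then determines exactly when rank three is attained, yielding the criterion of the statement. The main obstacle I anticipate is twofold. First, the nonlinearity of $\eta_3$: unlike the inflection case, where $\eta_1=c_{30}$ is linear so that its differential is simply $dc_{30}$, here the gradient at $P$ depends essentially on the nonzero value $\q{a}{30}$ and must be pulled back coefficient by coefficient, with the spatial entries of the third row formally reaching degree five (hence a careful passage to $J^5(2,1)$ for those entries). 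Second, one must justify that the polynomial expression $\zeta$ indeed computes the $t_i$--derivative of the geometric invariant $\eta_3$; this is legitimate precisely because \eqref{eq:simpl2} confines the $t_i$--directions to the normalized singular stratum, where $\zeta$ and $\eta_3$ agree, so that the identity of the previous paragraph holds without ambiguity from the choice of extension off the singular locus.
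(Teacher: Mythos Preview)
Your proposal is correct and follows exactly the route the paper intends: the paper's own proof of this proposition consists solely of the sentence ``The proof of this result follows analogously to the proof of Proposition~\ref{prop:transvcod2ii},'' and your plan is precisely that analogy carried out in detail---working in a higher jet space to accommodate $c_{40}$, identifying the tangent space to $\mathcal{S}\cap\{\eta_3=0\}$ via $dc_{10}$, $dc_{11}$, $d\eta_3$, and recognizing the $t_i$-entries of the pulled-back $d\eta_3$ as $\der{\zeta}{t_i}(0,0)$. Your explicit verification that $d(\eta_3\circ\Phi_f)(\der{}{t_i})=\der{\zeta}{t_i}(0,0)$, using \eqref{eq:simpl2} to confine the $t_i$-directions to the normalized singular stratum, is the key step and is correctly handled.
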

\begin{proof} The proof of this result follows analogously to the proof of Proposition \ref{prop:transvcod2ii}.
\end{proof}

Let $F$ be a deformation of $f$ as in \eqref{eq:Fcod2ii} and satisfying \eqref{eq:simpl2}.
By Proposition \ref{prop:I2-V2}, we know that the stratum of second-order vertices in $\mathbb{R}^4$ is locally composed at the origin by a single regular manifold of codimension 3 and the stratum of second order inflections is empty (both considering beside the singular stratum).
We seek now for an explicit form for its projection onto the parameter space. We know that the stratum of first-order vertices is given by the zeros of the functions $F$ and $V_f$, where $V_f(x,y,t)=v_f(x,y)$
and the stratum of second-order vertices is given by $F=V_f=V_{f_2}=0$, where $V_2(x,y,t)=v_{f_2}(x,y)$, all of this sets in $\mathbb{R}^4$.

By the proof of Proposition \ref{prop:extratosing}, there exists a smooth function $S(x,y,t)$ such that $F(x,y,S(x,y,t),t)=0$, for every $(x,y,t)$ near the origin. This implies that, to find the stratum of first-order vertices in $\mathbb{R}^4$, it is sufficient to find the zeros of the map $\bar{V}_1(x,y,t)=V_1(x,y,S,t)$, where we identify $S(x,y,t)$ simply by $S$.

It turns out that, by direct computations, we obtain that the $2$-jet of $\bar{V}_1$ is given by 
$$
j^2(\bar{V}_1)=-6y(x+a_{22}y)(x^2+2\q{a}{22}xy-y^2).
$$
Since we already know that $\bar{V}_1=0$ are four smooth manifolds of codimension one, by observing the $2$-jet above, we can conclude that two branches of $\bar{V}_1=0$ can be parameterized by $(x,t)\mapsto(x,Y(x,t),t)$ and $(y,t)\mapsto(X(y,t),y,t)$ (The other two curves are transverse to the branches of \( C_f \)). Clearly, by using the fact that $\bar{V}_1(x,Y,t)$ and $\bar{V}_1(X,y,t)$ are identically zero, we can calculate the derivatives of $Y(x,t)$ and $X(y,t)$.

Now, to find the manifold representing the stratum of second-order vertices in $\mathbb{R}^4$, we need to find the zeros of the maps $\tilde{V}_2(x,t)=I_2(x,Y(x,t),S(x,Y(x,t),t),t)$ and $\tilde{V}_3(y,t)=I_2(X(y,t),y,S(X(y,t),y,t),t)$, which we know that one of them is a manifolds of codimension 3 and the other one is empty near $\Phi_f(0)$ (both of them besides the singular stratum).

Direct calculations show that 
$$
j^{10}(\tilde{V}_3)=-24\add x^9\left(\eta_4+\cdots x+\cdots t\right)
$$
and
$$
j^{10}(\tilde{V}_2)=24x^9\left(5(\atz^3 - \atz^2 \atd + \atz a_{41} - a_{50})\,x+\frac{\partial \eta_3}{\partial t}(0,0) \,t\right).
$$

From the equations above, the fact of $\{x=0\}$ being the singular stratum, and $\eta_4\neq0$, we ensure that the set expressing locally the desired stratum are the zeros of $\tilde{V}_2$, which can in turn be parameterized by $t\mapsto(X(t),t)$, since the coefficient of $x$ inside the parentheses is the condition for $\alpha_1$ to be a first-order vertex, and not of higher order (Section \ref{sec:A1-}), and therefore is non-zero. Thus, the stratum of second-order inflections can be parameterized by
$$
t\mapsto(X(t),Y(X(t),t),S(X(t),Y(X(t),t),t),t).
$$
Projecting onto the parameter space - last two coordinates - we find that such a stratum is a regular curve tangent to the $t$-axis. Direct calculations show that the $5$-jet of this curve is given by
$$
\left(\dfrac{\frac{\partial \zeta}{\partial t}(0,0)^5}{3125 (\atz^3 - \atz^2 \atd + \atz a_{41} - a_{50})^4} \, t^5,t\right),
$$
where $\zeta(t)=\q{A}{22}(s,t)\q{A}{30}^2(s,t)-\q{A}{30}(s,t) \q{A}{31}(s,t)+\q{A}{40}(s,t)$. Assuming $\frac{\partial \zeta}{\partial t}(0,0)$ non zero, then the stratum of second-order vertices is a regular curve with a tangency of order 5 with the stratum of singularities (See Figure \ref{fig:bif_cod2_caso2}).

\begin{figure}[h!]
	\centering
	\includegraphics[width=4cm]{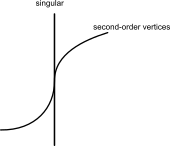}
	\caption{Geometric bifurcation set of a $A_1^-$-singularity when one of the branches has a vertice at the singular point.}
	\label{fig:bif_cod2_caso2}
\end{figure}

We already know the bifurcation set of $F$, we shall now find the behavior of the zeros of $f$ in each stratum.

Defining the functions $y(x,t)$ and $x(y,t)$ in the same way as in the previous case, we have $\dfrac{\partial y^2}{\partial x^2}(0,0)=-2\atz$, $\dfrac{\partial y^4}{\partial x^3\partial y}(0,0)=-6\frac{\partial \zeta}{\partial t}(0,0)$, and $\dfrac{\partial y^3}{\partial x^4}(0,0)=-24(\atz^2\atd-\atz\q{a}{41}+\q{a}{50}).$ (all these are nonzero, by hypotheses). 

The first equation above reveals the curvature of $y_t(x)$ near the origin. The second equation guarantees that the vertex changes sides as we pass through $t=0$ on the singular stratum. The behavior of the curve $\alpha_2$ is the same as in the previous case.

The reasoning for the analysis of all possible behaviors of the deformations on the bifurcation set is again the same as in the previous case. Note that, for the $t=0$ section, we have four possible cases, just like in \cite{GD}. These four cases are not equivalent and are shown in Figures \ref{fig:v2a}, \ref{fig:v2b}, \ref{fig:v2c}, and \ref{fig:v2d}, with their differences being the sign of the product $\eta_2 (\atz^3 - \atz^2 \atd + \atz \q{a}{41} - \q{a}{50})$ and the sign of $\eta_4$.

Since we only used the hypothesis of being \(\mathcal{A}\)-versal and $\frac{\partial \zeta}{\partial t}(0,0)\neq0$, the result follows:

\begin{theo}\label{teo:cod2ii}
	Let $f:(\mathbb{R}^2,(0,0))\to(\mathbb{R},0)$ be a smooth function germ with $A_1^-$-singularity as in \eqref{eq:fA1-simplif}, with $\eta_3=0$ and $\eta_1,\eta_2,\eta_4$ non zero. Then, generically, any 2-parameter $\mathcal{A}$-versal deformation of $f$ is $FRS$-equivalent to $F:(\mathbb{R}^4,0)\to(\mathbb{R},0)$ with $F(x,y,s,t)=f(x,y)+ x^4\, t+s$. The possible geometric configurations over the parameter space of \(F\) are given in  Figures \ref{fig:v2a}, \ref{fig:v2b}, \ref{fig:v2c}, and \ref{fig:v2d}.
\end{theo}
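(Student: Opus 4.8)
The plan is to prove Theorem \ref{teo:cod2ii} by following exactly the same strategy that was already executed for Theorem \ref{teo:cod2i}, transferring each step from the inflection case to the vertex case. First I would establish the normal form: starting from a germ $f$ with an $A_1^-$-singularity in the form \eqref{eq:fA1-simplif} satisfying $\eta_3=0$ and $\eta_1,\eta_2,\eta_4\neq0$, I would exhibit the explicit two-parameter deformation $F(x,y,s,t)=f(x,y)+x^4t+s$ and verify that it is $\mathcal{A}$-versal. The $+s$ term realizes $A_{00}(s,t)=s$ as required by \eqref{eq:Fcod2ii}, and the $+x^4t$ term is the geometrically meaningful direction: since $\eta_3=a_{40}$ up to the relation fixing $a_{40}$, the monomial $x^4$ is precisely the term whose coefficient must vary to unfold the degenerate (second-order) vertex on the branch $\alpha_1$. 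I would then check that this $F$ satisfies the transversality hypothesis $\frac{\partial\zeta}{\partial t}(0,0)\neq0$ from Proposition \ref{prop:geomversal2}, so that all the structural analysis already carried out applies verbatim.

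The core of the argument is the reduction, already performed in the paragraphs preceding the theorem, of the full bifurcation picture to a finite list of combinatorial data. I would invoke Proposition \ref{prop:extratosing} to normalize the singular stratum to $\{s=0\}$ and impose \eqref{eq:simpl2}, then use Proposition \ref{prop:I2-V2} (with $\eta_3=0$, $\eta_4\neq0$) to conclude that, besides the singular stratum, the only geometric stratum in the parameter space is the second-order vertex curve, which the $5$-jet computation shows is a regular curve having fifth-order tangency with $\{s=0\}$ (Figure \ref{fig:bif_cod2_caso2}). The bifurcation set is therefore completely determined up to the discrete invariants. It remains to read off the behavior of $F_{(s,t)}=0$ in each open stratum and across the strata: for this I would use the derivative computations $\frac{\partial^2 y}{\partial x^2}(0,0)=-2a_{30}$, $\frac{\partial^4 y}{\partial x^3\partial y}(0,0)=-6\frac{\partial\zeta}{\partial t}(0,0)$, and $\frac{\partial^3 y}{\partial x^4}(0,0)=-24(a_{30}^2a_{32}-a_{30}a_{41}+a_{50})$ to control the concavity of the branch $\alpha_1$ and the side-switching of the vertex as $t$ crosses zero, together with $x_0'(0)=-a_{22}$, $x_0''(0)=2\eta_2$ controlling the transverse branch $\alpha_2$.

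The enumeration of configurations then proceeds exactly as in the inflection case: on the horizontal section $t=0$ one reproduces the $H5$-type analysis of Giblin and Diatta \cite{GD}, which here splits into four inequivalent cases governed by the sign of $\eta_2(a_{30}^3-a_{30}^2a_{32}+a_{30}a_{41}-a_{50})$ and the sign of $\eta_4$; for each such case I would track the sections $t=\mathrm{const}\neq0$ and note that crossing the second-order vertex curve converts one second-order vertex into two first-order vertices on one side and none on the other. Assembling these pieces yields the four pictures of Figures \ref{fig:v2a}--\ref{fig:v2d}. Finally, the $FRS$-equivalence statement follows because throughout the construction only the hypotheses that $F$ is $\mathcal{A}$-versal and that $\frac{\partial\zeta}{\partial t}(0,0)\neq0$ were used, so any such deformation induces the same stratified picture as the model $F(x,y,s,t)=f(x,y)+x^4t+s$, which is precisely the definition of $FRS$-equivalence.

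I expect the main obstacle to be not any single computation but the honest verification that the four sign-cases are genuinely pairwise $FRS$-inequivalent and that no further degeneration is hidden. Concretely, the delicate point is controlling the relative position and ordering of the four vertex branches (two tangent to $C_f$, two transverse, per the $2$-jet $j^2(\bar V_1)=-6y(x+a_{22}y)(x^2+2a_{22}xy-y^2)$) as they interact with the second-order vertex curve near the origin; one must confirm that the higher-order contact of the tangent vertex branches with $C_f$ dominates so that the insertion order of the letters in the $vi$-notation is unambiguous, and that the limiting behavior as $t\to0$ matches the second-order vertex stratum consistently. This is the same subtlety that arose in the codimension-one analysis, and I would handle it by the same contact-order comparison used there rather than by a new idea.
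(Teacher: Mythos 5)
Your proposal follows essentially the same route as the paper's own argument: normalize via Proposition \ref{prop:extratosing} and \eqref{eq:simpl2}, use Proposition \ref{prop:I2-V2} to isolate the second-order vertex stratum, compute its $5$-jet projection to get the fifth-order tangency with $\{s=0\}$, control the branches with the same derivative formulas, and split into the four sign cases of $\eta_2(a_{30}^3-a_{30}^2a_{32}+a_{30}a_{41}-a_{50})$ and $\eta_4$, concluding $FRS$-equivalence because only $\mathcal{A}$-versality and $\frac{\partial\zeta}{\partial t}(0,0)\neq0$ are used. Your closing caveat about verifying the inequivalence of the four cases and the ordering of the vertex branches is legitimate, but the paper itself resolves it exactly as you propose, by the contact-order comparison carried over from the codimension-one analysis.
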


\begin{figure}[h!]
	\centering
	\includegraphics[width=8cm]{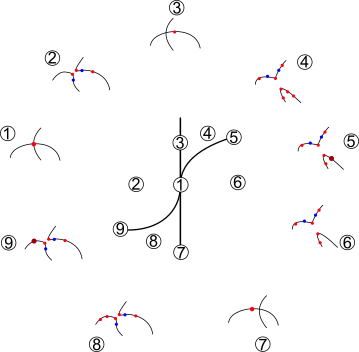}
	\caption{Bifurcation set where $\eta_2 (\atz^3 - \atz^2 \atd + \atz \q{a}{41} - \q{a}{50})>0$ and $\eta_4>0$. Red cdots are vertices, blue cdots are inflections and dark red cdots are second-order vertices.}
	\label{fig:v2a}
\end{figure}
\begin{figure}[h!]
	\centering
	\includegraphics[width=8cm]{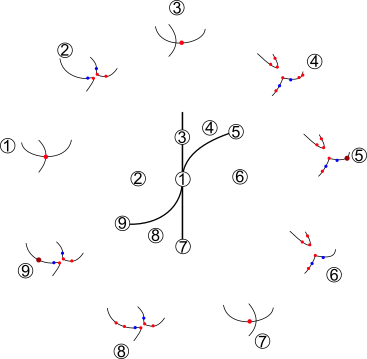}
	\caption{Bifurcation set where $\eta_2 (\atz^3 - \atz^2 \atd + \atz \q{a}{41} - \q{a}{50})<0$ and $\eta_4>0$.  Red cdots are vertices, blue cdots are inflections and dark red cdots are second-order vertices.}
	\label{fig:v2b}
\end{figure}
\begin{figure}[h!]
	\centering
	\includegraphics[width=9cm]{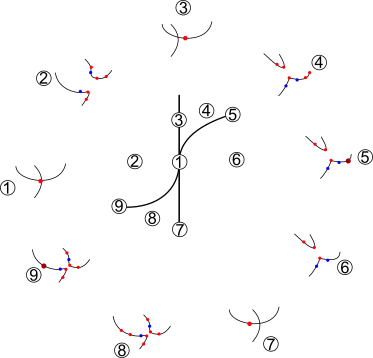}
	\caption{Bifurcation set where $\eta_2 (\atz^3 - \atz^2 \atd + \atz \q{a}{41} - \q{a}{50})>0$ and $\eta_4<0$.  Red cdots are vertices, blue cdots are inflections and dark red cdots are second-order vertices.}
	\label{fig:v2c}
\end{figure}
\begin{figure}[h!]
	\centering
	\includegraphics[width=9cm]{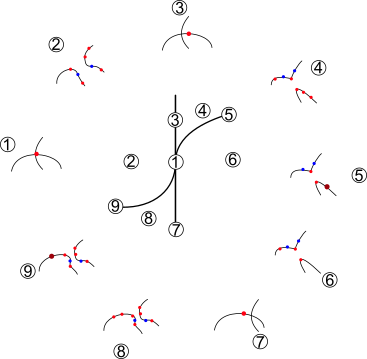}
	\caption{Bifurcation set where $\eta_2 (\atz^3 - \atz^2 \atd + \atz \q{a}{41} - \q{a}{50})<0$ and $\eta_4<0$.  Red cdots are vertices, blue cdots are inflections and dark red cdots are second-order vertices.}
	\label{fig:v2d}
\end{figure}

\section{$A_1^+$-singularity}
In this section, we investigate the geometry of a versal deformation of a Morse singularity $A_1^+$ without imposing further restrictions on it (as this would increase its codimension).
Let \( f \) be an \( A_1^+ \)-singularity, that is
\begin{equation}\label{eq:fA1+naosimplif}
	f(x,y)=\sum_{i+j=0}^{k} a_{ij} x^{i-j} y^j + O(k+1),
\end{equation} 
where \( a_{ij} \) are constants and \( a_{20}^2 - 4 a_{11} a_{02} > 0 \).

From Proposition \ref{prop:I2-V2}, the stratum $\mathcal{I}_2$ is contained in the singular stratum, meaning that the curves deforming this singularity will not exhibit inflections of order two or more locally. Moreover, by the same result and analysis from $A_1^-$ singularity, the stratum of vertices will consist of two curves passing through the origin and the stratum of inflextions will be only the singularity, which means that there will be four vertices in the deformation curves and none inflections. Thus, we can conclude that

\begin{theo}\label{teo:generico}
	Let \( f : (\mathbb{R}^2, (0,0)) \to (\mathbb{R}, 0) \) be a smooth function germ with an \( A_1^+ \)singularity. Then, generically, any 1-parameter $\mathcal{A}$-versal deformation of $f$ is $FRS$-equivalent to \( F : (\mathbb{R}^3, 0) \to (\mathbb{R}, 0) \) with \( F(x,y,t) = f(x,y) + t \). The geometric configuration of $F_t=0$ is: For one sign of the parameter $t$, there are four vertices and no inflections, for the other sign, there is no curve $C_f$.
\end{theo}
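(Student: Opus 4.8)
The plan is to reduce to the explicit model $F(x,y,t)=f(x,y)+t$ and to read off the configuration from the jet-space stratification already established in Propositions \ref{prop:IV} and \ref{prop:I2-V2}, following the pattern of the codimension-one $A_1^-$ case (Theorem \ref{teo:geocod2casei}). First I would note that an $A_1^+$-singularity has a definite quadratic part, so locally $C_f=\{0\}$ and $f$ has, up to sign and $\mathcal{R}$-equivalence, an isolated extremum at the origin. Since $A_1$ has $\mathcal{R}_e$-codimension one, $F(x,y,t)=f(x,y)+t$ is $\mathcal{R}_e$-versal; and because FRS-equivalence records only the diffeomorphism type together with the number and relative position of inflections and vertices, it suffices to compute this data for an arbitrary $\mathcal{A}$-versal one-parameter deformation and verify that it is independent of the deformation chosen.

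Next I would assemble the relevant strata near $P=\Phi_f(0,0)$. By Proposition \ref{prop:IV}(1), since the origin is a non-umbilic $A_1^+$-point, the inflection stratum $\mathcal{I}$ coincides locally with the singular stratum $\mathcal{S}$, while the vertex stratum $\mathcal{V}$ is the union of two regular codimension-one submanifolds meeting transversally along $\mathcal{S}$. By Proposition \ref{prop:I2-V2}, whose computation applies equally to the definite case, the higher-order strata $\mathcal{I}_2$ and $\mathcal{V}_2$ also coincide with $\mathcal{S}$. Hence no inflection of any order and no vertex of order $\geq 2$ can persist in the deformed curves, and every vertex that appears is simple.

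Then I would transport this to the product space. As $F$ is $\mathcal{A}$-versal, the Monge-Taylor map $\Phi_F$ is transversal to $\mathcal{S}$, hence to $\mathcal{I}$ and $\mathcal{V}$. Therefore $\Phi_F^{-1}(\mathcal{I})=\Phi_F^{-1}(\mathcal{S})$ meets $F^{-1}(0)$ only at the singular point, which occurs solely at $t=0$; so for $t\neq 0$ the curve $F_t^{-1}(0)$ has no inflections. The vertex locus $\Phi_F^{-1}(\mathcal{V})\cap F^{-1}(0)$ is, near the origin, a pair of regular curves through the origin, one from each transversal branch of $\mathcal{V}$, each tangent to the slice $\{t=0\}$ and hence meeting each nearby slice $\{t=t_0\}$ in two points, exactly as in the ellipse model.

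Finally I would fix the sign and settle the exact count. Since homotheties preserve vertices and inflections, the oval $\{f=-t\}$ has the same number of vertices as its rescaling by $1/\sqrt{|t|}$, which converges as $t\to 0$ to the ellipse cut out by the normalized definite quadratic part of $f$; a non-circular ellipse has exactly four vertices and no inflections, and these are nondegenerate, so they persist for all small $t\neq 0$. The non-umbilic hypothesis guarantees that the limiting ellipse is non-circular, so the two branches of $\mathcal{V}$ are distinct and the four vertices simple. If the quadratic part is positive definite then $\{f=-t\}$ is nonempty exactly for $t<0$, the negative-definite case being symmetric; thus one sign of $t$ yields an oval with four vertices and no inflections, and the other the empty curve. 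Since this configuration depends only on $\mathcal{S}$, $\mathcal{I}$ and $\mathcal{V}$, which are identical for every $\mathcal{A}$-versal one-parameter deformation, all such deformations are FRS-equivalent to $F(x,y,t)=f(x,y)+t$. The main obstacle is exactly this count: transversality to $\mathcal{V}$ alone yields only an even number of vertices on each shrinking oval, and it is the comparison with the rescaled ellipse model, together with the non-umbilic assumption, that pins the total number to precisely four.
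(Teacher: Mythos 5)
Your proposal is correct and follows essentially the same route as the paper: Proposition \ref{prop:IV}(1) (with the generic non-umbilic assumption) gives $\mathcal{I}=\mathcal{S}$ locally and $\mathcal{V}$ as two transversal codimension-one submanifolds, the analogue of Proposition \ref{prop:I2-V2} rules out higher-order vertices, and transversality of the Monge--Taylor map for a versal $F$ pulls this back to two vertex curves through the origin and no inflections, yielding four vertices on the ovals for one sign of the parameter and the empty curve for the other. Your closing rescaling-to-an-ellipse argument is a welcome elaboration that makes explicit the exact count of four simple vertices (and the sign determination), which the paper asserts directly from the two vertex curves without spelling it out.
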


\section{$A_2$-singularity}

We will now consider the geometric deformations of a $A_2$-singularity  \( f : (\mathbb{R}^2, (0,0)) \to (\mathbb{R}, 0) \). The $A_2$-singularity has codimension two. Since any additional geometric condition on $f$ implies an increase in the geometric codimension of $f$, the only case of codimension two is the generic case (when we do not put any further condictions on $f$).

It is known that a versal deformation of a $A_2$-singularity has its bifurcation set as an ordinary cusp, and the behavior of the deformed curves is given in Figure \ref{fig:A2bifurcacaosing}.

\begin{figure}[h!]
	\centering
	\includegraphics[width=9cm]{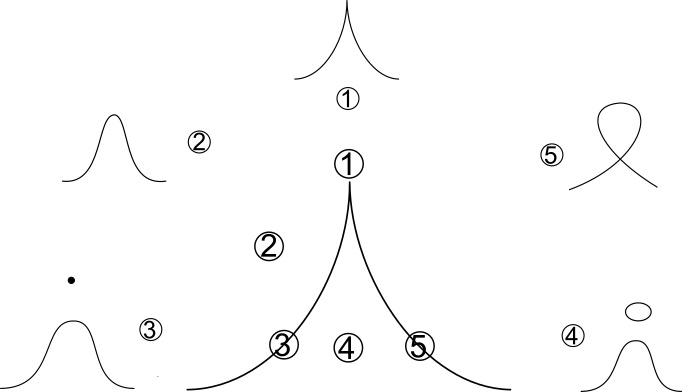}
	\caption{Bifurcation set of a versal deformation of a $A_2$-singularity.}
	\label{fig:A2bifurcacaosing}
\end{figure}

With only isometries in the domain of $f$, we can assume $f$ to be of the form

\begin{equation}\label{eq:fA2-naosimplif}
	f(x,y)=\sum_{i+j=0}^{k}\q{a}{ij}x^{i-j}y^j+O(k+1),
\end{equation} 
where $a_{ij}$ are constants, $\auu=\auz=\add=\adu=0$, $\adz=1$, and $\att\neq0$, that is, $f$ is of the form $f=x^2+\atz x^3+\atu x^2 y+\atd x y^2+\att y^3 + O(4),$ with $\att\neq0$.

It is also known that a necessary and sufficient condition for a deformation of $F$ to be versal is that the determinant of 
$$
\left[\begin{array}{cc}
	\frac{\partial A_{00}}{\partial s} & \frac{\partial A_{00}}{\partial t}\\
	\frac{\partial A_{11}}{\partial s} & \frac{\partial A_{11}}{\partial t}
\end{array}\right]
$$
is non-zero at the origin. Thus, by a change of coordinates in the parameter space, we can assume 
\begin{equation}\label{eq:versalA2}
	A00(s,t)=s \ \ \text{and} \ \ A_{11}(s,t)=t.
\end{equation}

Taking equation $i=i_2=0$, which represent the second-order inflections, we can notice that $i$ coincides with the quadratic part of $f$, applied at the point $(x,y)=(\auz,-\auu)$, while $\frac{i_2}{\auz^2+\auu^2}$ coincides with the cubic part of $f$ applied at this same point. Since $f$ is a $A_2$-singularity, the quadratic and cubic parts have no common roots, implying that locally, at $\Phi_f(0,0)$, there are no zeros of $i$ and $i_2$ simultaneously, apart from the singular stratum. 

For the zeros of $v_1=v_2=0$, we will conduct the study using resultant theory. 
In this case, the resultant between $v_1$ and $v_2$ is	
$$
4096\, \auz^{33}\, \att Z,
$$
where, if we apply $\auz=\auu=0$ to $Z$, we obtain $-\atz^6$, which is the highest-degree terms of $v_1$ and $v_2$ and we can assume is non-zero. Thus, we also conclude that locally, at $\Phi_f(0)$, there is no intersection between $v_1=0$ and $v_2=0$, apart from the singular stratum itself.

Therefore, for the most general case of a $A_2$-singularity, there are no second-order inflections or vertices.

We will now prove that, generically, there is only one possible behavior for the inflections and vertices in a deformation of a $A_2$-singularity. Firstly, recall that its deformation, considering only the study of the singularities, is given in Figure \ref{fig:A2bifurcacaosing}. Over the curve of the singular stratum, its geometric behavior coincides with that for parameterized curves (see \cite{Tessier}, for more datails), so we already know how its geometry behaves (see Figure \ref{fig:A2BifParam}) .
\begin{figure}[h!]
	\centering
	\includegraphics[width=7cm]{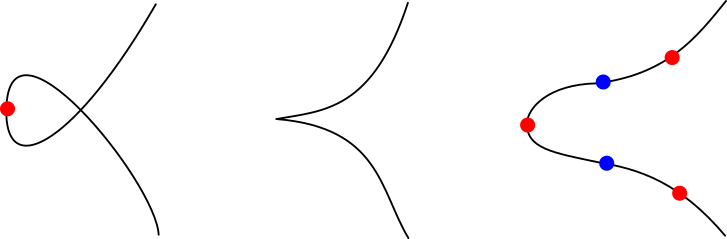}
	\caption{Versal geometric deformation of a parameterized $A_2$-singularity.  Red cdots are vertices and blue cdots are inflections.}
	\label{fig:A2BifParam}
\end{figure}

Clearly, in one of the regular parts of the singular stratum, the singularity is $A_1^-$, while in the other, $A_1^+$. The geometric transition over the $A_1^+$ part is straightforward, as there is only one possibility. Since the geometric bifurcation set of the $A_2$-singularity coincides with the bifurcation set of any versal deformation, we conclude the behavior of the deformation, which is given in Figure \ref{fig:A2bifurcacaoGeo}.

\begin{figure}[h!]
	\centering
	\includegraphics[width=8.5cm]{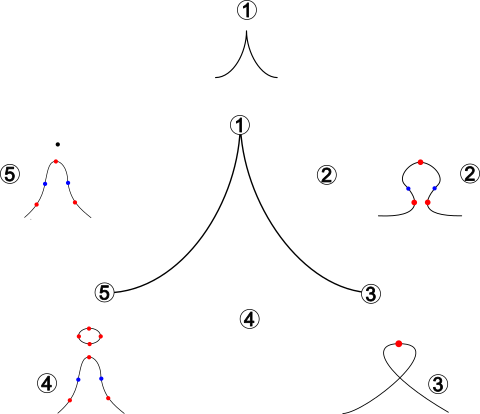}
	\caption{Versal geometric deformation of a $A_2$-singularity.  Red cdots are vertices and blue cdots are inflections.}
\label{fig:A2bifurcacaoGeo}
\end{figure}

We can thus, from the analysis conducted in this section, deduce the following result.

\begin{theo}
	Let $f:(\mathbb{R}^2,(0,0)) \to (\mathbb{R},0)$ be a smooth function germ with $A_2$ singularity. Then, generically, any 2-parameter $\mathcal{A}$-versal deformation of $f$ is FRS-equivalent to $F:(\mathbb{R}^4,0)\to(\mathbb{R},0)$ with 
	$
	F(x,y,s,t)=f(x,y)+t\,y+s.
	$
	and its geometric behavior is given in Figure \ref{fig:A2bifurcacaoGeo}.
\end{theo}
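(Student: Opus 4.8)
The plan is to normalize $f$ and its deformation to the standard versal model, to rule out all order-$\geq 2$ inflections and vertices near the origin, and then to assemble the global configuration by combining the codimension-one transitions already classified with the known geometry along the singular stratum.

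First I would use isometries of the domain---which preserve inflections and vertices---to put $f$ in the form \eqref{eq:fA2-naosimplif}, so that $f=x^2+\atz x^3+\atu x^2y+\atd xy^2+\att y^3+O(4)$ with $\att\neq 0$. The $A_2$-germ admits a two-parameter miniversal unfolding, and by the versality criterion recorded above (nonvanishing of the $2\times 2$ determinant built from the $s$- and $t$-derivatives of $A_{00}$ and $A_{11}$), a change of coordinates in the parameter plane arranges \eqref{eq:versalA2}, namely $A_{00}(s,t)=s$ and $A_{11}(s,t)=t$. Since $F(x,y,s,t)=f(x,y)+ty+s$ is itself $\mathcal{R}_e$-versal and realizes these normalizations, every $2$-parameter $\mathcal{A}$-versal deformation of $f$ differs from this model by a parametrized change of coordinates; FRS-equivalence is thereby reduced to studying the single germ $F$.

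Next I would verify that no inflection or vertex of order $\geq 2$ collides at the origin, which is exactly the resultant analysis already carried out: the system $i=i_2=0$ would force the quadratic and cubic parts of $f$ to share a root, impossible for an $A_2$-germ, while the resultant of $v_1$ and $v_2$ equals $4096\,\auz^{33}\att Z$ whose restriction $-\atz^6$ to $\auz=\auu=0$ is generically nonzero. Consequently only ordinary inflections and vertices appear locally, so the bifurcation picture is governed entirely by $\mathcal{I}$, $\mathcal{V}$, and the singular stratum $\mathcal{S}$. The stratum $\mathcal{S}$ is the discriminant of the versal $A_2$-unfolding, an ordinary cusp (Figure \ref{fig:A2bifurcacaosing}), whose complement has three chambers. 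Along the smooth part of this cusp the curve $C_f$ is regular, and its inflections and vertices follow the parametrized-curve transitions for an $A_2$-germ (Figure \ref{fig:A2BifParam}, cf.\ \cite{Tessier}); one branch of the cusp degenerates through $A_1^-$ and the other through $A_1^+$.

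Finally I would glue the pieces. Crossing the $A_1^+$ branch is unique by Theorem \ref{teo:generico}, and crossing the $A_1^-$ branch is constrained by Theorem \ref{teo:geocod2casei}; together with the rigid geometry prescribed along $\mathcal{S}$, these data fix the counts and cyclic order of the ordinary inflections and vertices in each of the three chambers. The assembly is then forced, yielding the unique configuration of Figure \ref{fig:A2bifurcacaoGeo}. The main obstacle is precisely this last gluing step: one must check that the two chamber-wall transitions and the parametrized-curve behavior along the singular stratum are mutually compatible and admit only one global realization. Because each local ingredient is itself rigid, the crux is the consistency verification rather than any further computation---the resultant identities serve only to guarantee that no extra degeneracy intrudes to break this rigidity.
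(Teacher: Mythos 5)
Your proposal is correct and follows essentially the same route as the paper: the same isometry normalization and versality reduction to $F(x,y,s,t)=f(x,y)+ty+s$, the same resultant identities ruling out second-order inflections and vertices away from the singular stratum, and the same gluing of the cusp bifurcation set using the parametrized $A_2$ behavior along $\mathcal{S}$ together with the $A_1^{+}$ and $A_1^{-}$ wall-crossing transitions. Your explicit appeal to Theorems \ref{teo:generico} and \ref{teo:geocod2casei} in the final gluing step merely makes precise what the paper leaves implicit, so no substantive difference remains.
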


\section*{Acknowledgements}

The authors would like to thank Professor Dr. Farid Tari for his invaluable comments and significant assistance in the development of this work.



\begin{thebibliography}{6}
	\addcontentsline{toc}{chapter}{Refer{\^e}ncias Bibliogr{\'a}ficas}
	
	\bibitem{Tessier}
	B. Tessier, P. Holm, The hunting of invariants in the geometry of the discriminant, Real and Complex Singularities. Oslo 1976, \textit{Sijthoff and Noordhoff, Alphen aan den Rijn}, p. 565-677, 1977.
	
	\bibitem{Arnold}
	V. I. Arnold, S. M.  Gusein-Zade and A. N. Varchenko,
	{\it Singularities of differentiable maps}. Vol. I. Monographs in Mathematics 82, 1985.	
		
	\bibitem{BG}
	J. W. Bruce, P. J. Giblin, 
	{\it Curves and Singularities}. Cambridge University Press, 1984. Second Edition, 1992.
	
	\bibitem{GD} A. Diatta, P. Giblin, Vertices and inflexions of plane sections of surfaces in $\mathbb{R}^3$. \textit{Real and Complex Singularities}, Trends in Mathematics, p. 71-97, 2006.
	
	\bibitem{FRS} S. Mostafa, F. Tari, Flat and Round Singularity Theory of Planes Curves. \textit{The Quarterly Journal of Mathematics}, v. 68, p. 1289-1312, 2017. 
	
	\bibitem{Perestr} G. Capitanio, A Diatta. Perestroikas of vertex sets at umbilic points, \textit{Funct. Anal. Other Math} v. 2, p. 165–178, 2009.

	\bibitem{David}   J. M. S. David, Projection-generic curves, \textit{J. Lond. Math. Soc.} v. 27, 552–562, 1983.

	\bibitem{DN} F. S. Dias, J. J. Nuño Ballesteros, Plane curve diagrams and geometrical applications, \textit{Q. J. Math}. v. 59, p. 287–310, 2008.

	\bibitem{OT} R. Oset Sinha and F. Tari, Projections of space curves and duality, Q. J. Math. v. 64,
	p. 281–302, 2013.
	
	\bibitem{Wall} C. T. C. Wall, Geometric properties of generic differentiable manifolds. Lecture Notes in
	Mathematics, v. 597, p. 707–774, 1977.
	
\end{thebibliography}
\end{document}